\documentclass[11pt,letterpaper]{amsart}

\usepackage[english]{babel}
\usepackage[dvipsnames]{xcolor}
\usepackage{xstring}
\usepackage{fontawesome5}

\usepackage[backgroundcolor=white,linecolor=red,bordercolor=red]{todonotes}
\usepackage[shortlabels]{enumitem}
\setenumerate{label=(\roman*)}

\usepackage{hyperref}
\newcommand\myshade{100}
\hypersetup{
  linkcolor  = RoyalBlue!\myshade!black,
  citecolor  = ForestGreen!\myshade!black,
  urlcolor   = RedOrange!\myshade!black,
  colorlinks = true,
}

\colorlet{githubrefcolor}{RedOrange!\myshade!black}
\makeatletter
\newcommand{\github@breakable@text}[1]{%
  \begingroup
  \StrSubstitute{#1}{/}{\discretionary{/}{}{/}}[\github@breakable@result]%
  \texttt{\github@breakable@result}%
  \endgroup
}
\newcommand{\github@breakable@literal}[1]{%
  \begingroup
  \edef\github@text{\detokenize{#1}}%
  \github@breakable@text{\github@text}%
  \endgroup
}
\newcommand{\github@ref}[3]{%
  \href{#3}{\textcolor{githubrefcolor}{#1\,#2}}%
}
\newcommand{\github@ref@literal}[3]{%
  \href{#3}{\textcolor{githubrefcolor}{#1\,\github@breakable@literal{#2}}}%
}
\newcommand{\github@faicon}[1]{{\scriptsize #1}}
\DeclareRobustCommand{\leanicon}{%
  \begingroup
  \scriptsize
  \leavevmode
  \ooalign{%
    \hfil$\bm\forall$\hfil\cr
    \hfil\kern0.018em$\bm\forall$\hfil\cr
    \hfil\kern0.036em$\bm\forall$\hfil\cr
    \hfil\kern-0.018em$\bm\forall$\hfil\cr
    \hfil\kern-0.036em$\bm\forall$\hfil\cr
  }%
  \endgroup
}
\newcommand{\github@strip@suffix}[1]{%
  \IfSubStr{#1}{?}{\StrBefore{#1}{?}[#1]}{}%
  \IfSubStr{#1}{\#}{\StrBefore{#1}{\#}[#1]}{}%
}
\newcommand{\github@repo@text}[1]{%
  \begingroup
  \StrBehind{#1}{github.com/}[\github@path]%
  \github@strip@suffix{\github@path}%
  \IfSubStr{\github@path}{/}{%
    \StrBehind{\github@path}{/}[\github@afterowner]%
    \IfSubStr{\github@afterowner}{/}%
      {\StrBefore{\github@afterowner}{/}[\github@text]}%
      {\let\github@text\github@afterowner}%
  }{%
    \let\github@text\github@path
  }%
  \github@breakable@text{\github@text}%
  \endgroup
}
\newcommand{\github@line@text}[1]{%
  \begingroup
  \StrBehind{#1}{/blob/}[\github@path]%
  \github@strip@suffix{\github@path}%
  \IfSubStr{\github@path}{/}%
    {\StrBehind{\github@path}{/}[\github@file]}%
    {\StrBehind{#1}{github.com/}[\github@file]\github@strip@suffix{\github@file}}%
  \StrBehind{#1}{\#}[\github@line]%
  \IfSubStr{\github@line}{?}{\StrBefore{\github@line}{?}[\github@line]}{}%
  \github@breakable@text{\github@file}\space\github@breakable@text{\github@line}%
  \endgroup
}
\newcommand{\github@repo@default}[1]{%
  \github@ref{\github@faicon{\faGithub}}{\github@repo@text{#1}}{#1}%
}
\newcommand{\github@line@default}[1]{%
  \github@ref{\leanicon}{\github@line@text{#1}}{#1}%
}
\newcommand{\github@auto@default}[1]{%
  \IfSubStr{#1}{\#L}%
    {\github@line@default{#1}}%
    {\github@repo@default{#1}}%
}
\newcommand{\github@auto@literal}[2]{%
  \IfSubStr{#2}{\#L}%
    {\github@ref@literal{\leanicon}{#1}{#2}}%
    {\github@ref@literal{\github@faicon{\faGithub}}{#1}{#2}}%
}
\NewDocumentCommand{\gh}{om}{%
  \IfNoValueTF{#1}%
    {\github@auto@default{#2}}%
    {\github@auto@literal{#1}{#2}}%
}
\makeatother

\usepackage{bm}

\usepackage[capitalise]{cleveref}
\crefformat{equation}{(#2#1#3)}
\crefname{subsection}{Section}{Sections}

\theoremstyle{plain}
\newtheorem{thm}{Theorem}[section]
\crefname{thm}{Theorem}{Theorems}
\newtheorem{lem}[thm]{Lemma}
\newtheorem{prop}[thm]{Proposition}

\theoremstyle{definition}
\newtheorem{defn}[thm]{Definition}
\newtheorem{question}{Question}

\ExplSyntaxOn
\NewDocumentCommand{\DeclareLeanTheoremEnvironment}{m}
  {
    \cs_new_eq:cc { lean_old_#1: } { #1 }
    \cs_new_eq:cc { lean_old_end_#1: } { end#1 }
    \RenewDocumentEnvironment{#1}{ o d<> }
      {
        \IfNoValueTF{##1}
          { \use:c { lean_old_#1: } }
          { \use:c { lean_old_#1: } [##1] }
        \IfNoValueF{##2}
          { \nobreak\textup{##2}\space }
      }
      { \use:c { lean_old_end_#1: } }
  }
\ExplSyntaxOff

\DeclareLeanTheoremEnvironment{thm}

\makeatletter
\def\@fnsymbol#1{\ensuremath{\ifcase#1\or \dagger\or \ddagger\or
           \dagger\dagger
           \or \ddagger\ddagger \else\@ctrerr\fi}}
\makeatother

\newcommand\N{\ensuremath{\mathbb{N}}}
\newcommand\R{\ensuremath{\mathbb{R}}}
\newcommand\Q{\ensuremath{\mathbb{Q}}}

\DeclareMathOperator{\supp}{supp}

\usepackage[backend=biber,maxnames=999,giveninits=true,style=numeric,sortcites=true,datamodel=mrnumber,isbn=false,url=false,doi=false]{biblatex}
\usepackage{csquotes}

\renewbibmacro*{in:}{}

\DeclareRobustCommand{\theoremcite}[2]{\cite[#1]{#2}}

\DeclareFieldFormat{title}{\myhref{\mkbibemph{#1}}}
\DeclareFieldFormat
  [article,inbook,incollection,inproceedings,patent,thesis,unpublished]
  {title}{\myhref{\mkbibquote{#1\isdot}}}

\newcommand{\doiorurl}{%
  \iffieldundef{doi}
    {\iffieldundef{url}
       {}
       {\strfield{url}}}
    {https://doi.org/\strfield{doi}}%
}

\newcommand{\myhref}[1]{%
 \ifboolexpr{%
   test {\ifhyperref}
   and
   not test {\iftoggle{bbx:url}}
   and
   not test {\iftoggle{bbx:doi}}
  }
  {\ifboolexpr{%
     test {\iffieldundef{doi}}
     and
     test {\iffieldundef{url}}
   }
   {#1}
   {\href{\doiorurl}{#1}}}
  {#1}%
}

\DeclareFieldFormat{mrnumber}{%
  MR\addcolon\space
  \ifhyperref
    {\href{https://www.ams.org/mathscinet-getitem?mr=#1}{\nolinkurl{#1}}}
    {\nolinkurl{#1}}}

\renewbibmacro*{doi+eprint+url}{%
  \printfield{mrnumber}%
}

\DeclareFieldFormat{eid}{\printtext{article no.} #1}

\title[Aliprantis's questions]{Aliprantis's questions on locally solid topologies}
\author{David Muñoz-Lahoz}
\address{Instituto de Ciencias Matemáticas, Universidad Autónoma de
Madrid, C/ Nicolás Cabrera 13–15, 28049 Madrid, Spain }
\email{david.munnozl@uam.es}
\author{Mitchell A.\ Taylor}
\address{Department of Mathematics, ETH Zürich, Rämistrasse 101, 8092 Zürich, Switzerland}
\email{mitchell.taylor@math.ethz.ch}
\author{Pedro Tradacete}
\address{Instituto de Ciencias Matemáticas, Consejo Superior de
    Investigaciones Científicas, C/ Nicolás Cabrera 13–15, 28049 Madrid, Spain }
\email{pedro.tradacete@icmat.es}
\thanks{First author supported by an FPI–UAM 2023 contract (funded by
Universidad Autónoma de Madrid). First and third authors were partially supported by grants PID2024-162214NB-I00 and
CEX2023-001347-S (funded by MCIN/AEI/10.13039/501100011033).}
\date{\today}
\subjclass[2020]{Primary 46A40; Secondary 46B42, 68V20}
\keywords{locally solid vector lattices, locally solid Riesz space,
topological completion, metrizable topological vector space}

\begin{document}

\begin{abstract}
    In 1974, C.\ D.\ Aliprantis posed several questions concerning
    topological completions of locally solid vector lattices. We
    answer all of them in the negative. We construct Hausdorff
    locally convex-solid vector lattices showing that, without
    metrizability, neither the $\sigma$-Lebesgue property nor property
    (B, i) need pass to the completion; a positive element of the
    completion need not be the limit of a decreasing sequence of upper
    elements; the generalized (A, 0) property need not make the
    canonical image a regular sublattice; and regularity of this image
    need not imply order density. All five
    counterexamples have been formalized in Lean 4 using
    the Banach lattice Lean library.
\end{abstract}

\maketitle
\section{Introduction}

C.\ D.\ Aliprantis's early work focused on locally solid vector lattices
\cite{wickstead2011}. This line of research culminated in the
publication of the standard monograph in the field, coauthored with
O.\ Burkinshaw \cite{aliprantis_burkinshaw2003}. In one of his earliest
papers, published in 1974, Aliprantis studied completions of locally
solid vector lattices \cite{aliprantis1974}. At the end of that paper,
he posed a list of open questions whose resolution would help complete
the theory. Many of these questions were also stated in the original 1978 edition of \cite{aliprantis_burkinshaw2003}
and in the memorial paper \cite{wickstead2011}. 
The goal of this note is to answer all of them in
the negative by constructing counterexamples and to provide a Lean
formalization of the counterexamples. For each question, we shall also
provide enough context to motivate it and show how it fits into the
theory.

Next, we give an overview of the questions considered. The questions
in \cref{sec:q1,sec:q2,sec:q3,sec:q5} concern properties or
implications known to hold in the metrizable case, and Aliprantis
asked whether they also hold in the non-metrizable case. More
precisely, \cref{sec:q1,sec:q2} show that the $\sigma$-Lebesgue
and (B, i) properties (see Definitions \ref{def:sigmaLebesgue} and \ref{def:property(B,i)}), respectively, need not pass to the completion in the
non-metrizable case. On the other hand, \cref{sec:q3,sec:q5} show that the metrizability assumptions in
\cref{thm:lux,thm:regdense}, respectively, cannot be dropped. Finally,
the question addressed in \cref{sec:q4} is slightly different: it is shown that
the condition in \cref{thm:kawai}, which characterizes when the image
of a Hausdorff locally solid vector lattice in its completion is
regular, cannot be in general weakened.

Aliprantis, together with O.\ Burkinshaw, posed two further questions on locally solid vector
lattices in a later paper \cite{aliprantis_burkinshaw1977}. Although
they are still listed as open questions in \cite{wickstead2011}, their
consistency was actually clarified by D.\ H.\ Fremlin \cite{fremlin1975},
G.\ Buskes, and I.\ Labuda \cite{buskes_labuda1988}, who showed that
an affirmative answer to either question is equiconsistent with the
nonexistence of measurable cardinals (cf. \cite[Chapter 10]{Jech}).

\subsection{Preliminaries and notation}

For terminology concerning vector lattices (also called Riesz spaces)
and locally solid topologies that is not introduced here, we refer the
reader to the monograph \cite{aliprantis_burkinshaw2003}. In the
remainder of this section, we recall some standard facts and notation
that will be used throughout.

Given a vector space $X$ and a family $\mathcal{P}$ of seminorms on
$X$, the topology generated by $\mathcal{P}$ is the unique locally
convex linear topology for which the sets of the form
\[
\{\, x \in X : p_1(x),\ldots ,p_n(x)<\varepsilon
\, \},
\]
where $n \in \N$, $p_1,\ldots ,p_n \in \mathcal{P}$ and $\varepsilon
>0$, form a basis of neighborhoods of zero. This topology is Hausdorff
if and only if, for every nonzero $x \in X$, there exists
$p \in \mathcal{P}$ such that $p(x)\neq 0$. If $X$ is a vector lattice
and every element of $\mathcal{P}$ is a lattice seminorm, then $X$,
equipped with the topology generated by $\mathcal{P}$, is a locally
convex-solid vector lattice.

Throughout, a Cauchy net is a topologically Cauchy net; that is, a net
$(x_\alpha )\subseteq X$ such that, for every neighborhood of zero
$U$, there exists an index $\alpha _0$ for which
$x_\alpha -x_\beta \in U$ for all $\alpha ,\beta \ge \alpha _0$.
Similarly, a convergent net is always understood to be topologically
convergent, and $x_\alpha \to x$ always denotes topological
convergence. A topological vector space is topologically complete if
every Cauchy net converges.

Let $X$ be a Hausdorff topological vector space. Suppose that there is
a complete Hausdorff topological vector space $\hat{X}$ and an
embedding $J\colon X\to \hat{X}$ (that is, a linear topological
embedding) such that $J(X)$ is dense in $\hat{X}$. Then the pair
$(\hat{X},J)$ is unique up to isomorphism and is called the
topological completion of $X$. Although $X$ can be identified with
$J(X)$, and hence regarded as a linear subspace of $\hat{X}$, we will
usually display $J$ explicitly to avoid ambiguity.

Every Hausdorff topological vector space admits a topological
completion \cite[Section 1.5]{schaefer1966}. If $X$ is a Hausdorff
locally solid vector lattice, then the closure of $J(X_+)$ in
$\hat{X}$ equips $\hat{X}$ with the structure of a Hausdorff locally
solid vector lattice and makes $J$ a lattice homomorphism. Henceforth,
when we refer to the completion of a Hausdorff locally solid vector
lattice, we mean this particular Hausdorff locally solid vector
lattice structure.

In practice, to verify that a Hausdorff locally solid vector lattice
$Y$, together with a map $J\colon X\to Y$, is the completion of $X$,
it suffices to check the following:
\begin{enumerate}
    \item $Y$ is complete.
    \item $J$ is a vector lattice homomorphism and a topological
        embedding.
    \item $J(X_+)$ is dense in $Y_+$.
\end{enumerate}
The third condition ensures that $Y_+$ is the closure of $J(X_+)$ and
that $J(X)$ is dense in $Y$. We will use this checklist repeatedly throughout the paper.

\subsection{Lean formalization}

The counterexamples presented in this paper have been formalized in
Lean 4 using version \verb|v0.1.0| of
\gh{https://github.com/davidmunozlahoz/banlat}, a Lean library for
Banach lattices. This version of the library, in turn, depends on Mathlib version
\verb|v4.30.0| \gh{https://github.com/leanprover-community/mathlib4}.
Each of the five sections addressing Aliprantis's questions
corresponds to a file in the formalization repository
\gh{https://github.com/davidmunozlahoz/aliprantis}.
The main theorem in each section includes a link to the corresponding
Lean declaration. These links point to a specific line in
\gh{https://github.com/davidmunozlahoz/aliprantis} and are displayed
as follows:
\gh[theorem2_5]{https://github.com/davidmunozlahoz/Aliprantis/blob/526881fef80400db04f8f3c736dc64d3fdc00f6c/Aliprantis/Q1.lean\#L1008}.

\section{Question 1}\label{sec:q1}

\subsection{Background}

We begin by recalling two basic definitions in the theory of locally
solid vector lattices.

\begin{defn}\label{def:sigmaLebesgue}
    Let $X$ be a locally solid vector lattice.
    \begin{enumerate}
        \item We say that $X$ satisfies the \emph{$\sigma$-Lebesgue
            property} if, for every sequence $(x_n)\subseteq X_+$,
            $x_n \downarrow 0$ implies $x_n \to 0$.
        \item We say that $X$ satisfies the \emph{Lebesgue
            property} if, for every net $(x_\alpha )\subseteq X_+$,
            $x_\alpha \downarrow 0$ implies $x_\alpha \to 0$.
    \end{enumerate}
\end{defn}

These properties were called (A, i) and (A, ii), respectively, in
\cite{luxemburg_zaanen1964_X} and in the reference paper
\cite{aliprantis1974}. We use instead the terminology of
\cite{aliprantis_burkinshaw2003}, which originates in a paper by
D.\ H.\ Fremlin \cite{fremlin1974}; see the historical note following
\cite[Definition 3.1]{aliprantis_burkinshaw2003}. These two properties
have been studied extensively (see \cite[Chapter 3]{aliprantis_burkinshaw2003}),
especially when $X$ is a Banach lattice equipped with its norm
topology---that is, when $X$ is a
($\sigma$-)order continuous Banach lattice \cite{wnuk1999}. In
\cite{aliprantis1974}, Aliprantis investigated whether the
($\sigma$-)Lebesgue property passes to the topological completion and
obtained the following results.

\begin{thm}[\theoremcite{Theorem 3.2}{aliprantis1974} and
    \theoremcite{Theorem 5.1}{aliprantis1974}]
    Let $X$ be a Hausdorff locally solid vector lattice.
    \begin{enumerate}
        \item If $X$ is Lebesgue, then its topological
            completion is also Lebesgue.
        \item If $X$ is metrizable and $\sigma$-Lebesgue, then its
            topological completion is also $\sigma$-Lebesgue.
    \end{enumerate}
\end{thm}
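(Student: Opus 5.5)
The plan for both parts is to reduce order convergence in $\hat X$ to topological convergence of a monotone net in $X$. Two standard facts about a Hausdorff locally solid vector lattice $E$ drive everything. First, if a monotone net in $E$ converges topologically, its limit is its supremum or infimum, because $E_+$ is closed. Second, the Lebesgue property implies the pre-Lebesgue property, and likewise the $\sigma$-Lebesgue property implies the $\sigma$-pre-Lebesgue property. Here pre-Lebesgue means that order-bounded monotone nets (respectively sequences) are Cauchy; the usual proof goes through disjoint sequences $0\le d_n\le x$ tending to $0$, cf.\ \cite[Chapter 3]{aliprantis_burkinshaw2003}. With these facts, to show $\hat y_\alpha\downarrow 0$ implies $\hat y_\alpha\to 0$ in $\hat X$, it suffices to show that $(\hat y_\alpha)$ converges topologically to something. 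The first fact then forces the limit to equal $\inf_\alpha \hat y_\alpha=0$.

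I would do (ii) first. Let $\hat y_n\downarrow 0$ in $\hat X_+$, and fix a countable decreasing base $(U_n)$ of solid zero neighborhoods of $\hat X$ with $U_{n+1}+U_{n+1}\subseteq U_n$. By density of $J(X_+)$ in $\hat X_+$, choose $x_n\in X_+$ with $J(x_n)-\hat y_n\in U_{n+1}$, and set $z_n=x_1\wedge\cdots\wedge x_n$. The sequence $(z_n)$ is decreasing in $X_+$, hence order bounded by $x_1$. Since $\hat y_n=\hat y_1\wedge\cdots\wedge\hat y_n$ and $J$ is a lattice homomorphism, the inequality $|\bigwedge a_i-\bigwedge b_i|\le\sum|a_i-b_i|$ together with solidity gives
\[
J(z_n)-\hat y_n\in U_2+\cdots+U_{n+1}\subseteq U_1 .
\]
Running the same estimate from index $m$ onward shows $J(z_n)-\hat y_n\to 0$. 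By the $\sigma$-pre-Lebesgue property, $(z_n)$ is Cauchy in $X$. Hence $J(z_n)$ converges in the complete space $\hat X$ to some $w$, so $\hat y_n\to w$, and the opening remark gives $w=0$.

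For (i), the same scheme works with nets, but the approximating net must be indexed so that finite infima remain directed. Given $\hat y_\alpha\downarrow 0$, I would index by pairs $\lambda=(\alpha,U)$, with $U$ a solid zero neighborhood of $\hat X$, ordered componentwise. For each $\lambda$ choose $x_\lambda\in X_+$ with $J(x_\lambda)-\hat y_\alpha\in U$. Then take infima over finite sets of indices: for a finite set $F$ of indices, put $z_F=\bigwedge_{\lambda\in F}x_\lambda$, with the finite sets ordered by inclusion, which makes $(z_F)$ a decreasing net bounded by any single $x_\lambda$. The pre-Lebesgue property makes $(z_F)$ Cauchy, so $J(z_F)\to w$ in $\hat X$. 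It then remains to show that $\hat y_\alpha\to w$ as well.

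The main obstacle is this last comparison in the net case. The errors $J(x_\lambda)-\hat y_\alpha$ cannot be summed along a countable chain, so the estimate $|\bigwedge a_i-\bigwedge b_i|\le\sum|a_i-b_i|$ no longer controls $J(z_F)-\hat y$ uniformly. My first attempt would be to avoid the comparison entirely by a sandwich argument. For $\hat y_\alpha$ and any finite $F$ containing indices $(\beta,U)$ with $\beta\ge\alpha$, one has
\[
\hat y_\alpha\wedge J(z_F)\ \ge\ \hat y_{\beta_F}-(\text{error}),
\]
where $\beta_F$ is an upper bound of the first coordinates of the indices in $F$. Combined with monotonicity, this should show that $w$ is a lower bound for $(\hat y_\alpha)$ which is also approximated by the $\hat y_\alpha$ from above within each $\overline U$. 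If this fails, the fallback is to first prove that a Lebesgue $X$ sits in $\hat X$ as a regular sublattice whose positive elements approximate $\hat X_+$ from above. Then $\hat y_\alpha\downarrow 0$ can be pulled back to a net in $X$ decreasing to $0$, to which the Lebesgue property applies directly.
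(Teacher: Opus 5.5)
\paragraph{Part (ii).} This argument depends on the claim that the $\sigma$-Lebesgue property makes order bounded monotone sequences Cauchy. That claim is false, even for Banach lattices. Take $A$ uncountable and $C(A_\infty)$ with the sup norm. The Dini argument in the lemma of \cref{sec:q1} shows that $f_n\downarrow0$ forces $\|f_n\|_\infty\to0$. Yet $\chi_{\{a_1,\ldots,a_n\}}\uparrow\le\mathbf 1$, and consecutive terms are at distance $1$. The paper's space $(C(A_\infty),\tau)$ is a second instance.

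The standard proof that Lebesgue implies pre-Lebesgue applies the Lebesgue property to a net indexed by all upper bounds, and in general there is no countable substitute. So nothing makes $(z_n)$ Cauchy. You also cannot apply $\sigma$-Lebesgue to $(z_n)$ directly: your $x_n$ are only close to $\hat y_n$ on both sides, so you do not know that $\inf_n z_n$ exists in $X$, let alone that it is $0$. Separately, restarting the estimate at $m$ does not give $J(z_n)-\hat y_n\to0$. The element $z_n$ still involves $x_1,\ldots,x_{m-1}$, and $(\hat y_n-Jx_i)^+$ is only known to be small. You would have to restart the infima themselves.

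The missing idea is one-sided approximation from below a decreasing majorant. Metrizability enters here through Luxemburg's lemma (\cref{thm:lux}). Finite infima of upper elements are upper elements, so you obtain upper elements $\hat v_n\downarrow$ with $\hat v_n\ge\hat y_n$ and $\hat v_n-\hat y_n\to0$, which forces $\inf_n\hat v_n=0$. Choose $w_n\in X_+$ with $Jw_n\le\hat v_n$ and $\hat v_n-Jw_n\in U_{n+1}$; this is possible because each $\hat v_n$ is an increasing limit from $J(X_+)$. For fixed $m$, put $s_n=w_m\wedge\cdots\wedge w_n$. Then $Js_n\le\hat v_n$ gives $s_n\downarrow0$ in $X$, so $s_n\to0$ by the $\sigma$-Lebesgue property of $X$. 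Meanwhile $0\le\hat v_n-Js_n\le\sum_{i=m}^n(\hat v_i-Jw_i)\in U_m$. Hence $\hat v_n\to0$, and therefore $\hat y_n\to0$.

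\paragraph{Part (i).} Here the decisive step is left open, and your set-up cannot close it. You have $w=\inf_F J(z_F)\le J(x_{(\alpha,U)})$ for every $U$. This already gives $w\le\hat y_\alpha$ for all $\alpha$, hence $w=0$. So the convergence of $J(z_F)$ says nothing about $(\hat y_\alpha)$, and any sandwich would have to absorb the errors of the coarse indices in $F$. Your fallback is also false as stated. The space $c_{00}\subseteq c_0$ with the sup norm is Lebesgue, but $(1/n)_n\in c_0$ has no majorant in $c_{00}$.

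The obstacle disappears if you transfer the pre-Lebesgue property instead of approximating the given net. Argue by contradiction with a single sequence and a single target neighborhood, so that only countably many neighborhoods are needed and no metrizability. Suppose $0\le\hat s_n\uparrow\le\hat x$ is not Cauchy in $\hat X$. Take solid zero neighborhoods with $V_{k+1}+V_{k+1}\subseteq V_k$, all small relative to a neighborhood witnessing the failure. Pick $x\in X_+$ with $(\hat x-Jx)^+$ small, and $e_n\in X_+$ with $|Je_n-\hat s_n\wedge Jx|\in V_{n+k}$. Then $s_n=(e_1\vee\cdots\vee e_n)\wedge x$ is increasing and bounded by $x$. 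Moreover $|Js_n-\hat s_n\wedge Jx|\in V_k$ and $0\le\hat s_n-\hat s_n\wedge Jx\le(\hat x-Jx)^+$ for all $n$. So $(s_n)$ is not Cauchy, contradicting the pre-Lebesgue property of $X$, which does follow from the Lebesgue property. Thus $\hat X$ is pre-Lebesgue. In the complete space $\hat X$, the order bounded monotone net $(\hat y_\alpha)$ is therefore Cauchy and converges. By your first fact its limit is $\inf_\alpha\hat y_\alpha=0$.
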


These results motivate the following natural question.

\begin{question}[\theoremcite{Open Problem 1}{aliprantis1974}]
    Is the $\sigma$-Lebesgue property preserved under topological
    completion without assuming metrizability?
\end{question}

\subsection{Solution}

We construct a Hausdorff locally solid vector lattice that is
$\sigma$-Lebesgue but whose topological completion is not
$\sigma$-Lebesgue.

Let $A$ be an uncountable set with the discrete topology, and let
\[
A_\infty =A\cup \{\infty \}
\]
be its one-point compactification.
Recall that a function $f\colon A_\infty \to \R$ is continuous if and
only if, for every $\varepsilon >0$, the set
\[
\{\, a \in A : |f(a)-f(\infty )|\ge \varepsilon  \, \}
\]
is finite. We consider the space $C(A_\infty )$ of real-valued continuous functions on $A_\infty$, equipped with the topology whose convergence
is uniform on a fixed countable subset of $A $ and pointwise
everywhere else. More
precisely, fix a countable infinite subset
$B=\{b_j \colon j \in \N\}\subseteq A$, denote $A'=A\setminus B$, and define
\[
q(f)=\sup_{j} |f(b_j)|\quad\text{and}\quad q_a(f)=|f(a)|
\]
for every $a \in A'$ and $f \in C(A_\infty )$. The functions $q$ and
$q_a$, for $a \in A'$, are lattice seminorms on $C(A_\infty )$. Let
$\tau$ be the locally convex-solid topology on $C(A_\infty )$
generated by $\{q\}\cup \{q_a\colon a \in A'\}$. This
topology is Hausdorff: if $f \in C(A_\infty )$ is nonzero at some
$a \in A'$, then $q_a(f)\neq 0$; if it is nonzero at some $b_j \in B$,
then $q(f)\neq 0$; and if $f(\infty )\neq 0$, then also $q(f)\neq 0$
because $f(b_j)\to f(\infty )$.

\begin{lem}
    The Hausdorff locally solid vector lattice $(C(A_\infty ),\tau )$
    has the $\sigma $-Lebesgue property.
\end{lem}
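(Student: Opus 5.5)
To verify the $\sigma$-Lebesgue property, I will take a sequence $(f_n)\subseteq C(A_\infty)_+$ with $f_n\downarrow 0$ in the vector lattice $C(A_\infty)$ and show that $q(f_n)\to 0$ and $q_a(f_n)\to 0$ for every $a\in A'$. Since the topology is generated by these seminorms, this gives $f_n\to 0$ in $\tau$.

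The first and key step is to identify the order infimum. I claim that $f_n\downarrow 0$ in $C(A_\infty)$ forces $f_n(a)\to 0$ for every $a\in A$. Fix $a\in A$ and suppose $f_n(a)\downarrow c>0$. The indicator $\chi_{\{a\}}$ is continuous on $A_\infty$, because $a$ is isolated and the set $\{a\}$ is finite. Then $c\chi_{\{a\}}\le f_n$ for all $n$, since the sequence is decreasing and $f_n(a)\ge c$. This contradicts $\inf f_n=0$. Hence the infimum is pointwise on the isolated points of $A$. At $\infty$ we may not have pointwise convergence, but that is not needed.

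This immediately gives $q_a(f_n)=f_n(a)\to 0$ for $a\in A'$. For $q$, I need uniform convergence on $B$, and here lies the main obstacle: pointwise convergence on the countable set $B$ need not be uniform. To overcome it, I plan to show that $f_n(\infty)\to 0$ as well, and then use a compactness (Dini-type) argument. Let $d=\lim_n f_n(\infty)\ge 0$, and suppose $d>0$. For each $n$, the set $\{a\in A : f_n(a)\le d/2\}$ is finite, since $f_n(\infty)\ge d$ and $f_n$ is continuous. Its union over $n$ is therefore countable. Because $A$ is uncountable, there is some $a\in A$ outside this union, so $f_n(a)>d/2$ for all $n$. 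This contradicts the previous paragraph. Thus $f_n(\infty)\to 0$, and $f_n\to 0$ pointwise on all of the compact space $A_\infty$.

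Finally, the $f_n$ are continuous, the sequence decreases, and the pointwise limit $0$ is continuous. Dini's theorem then gives $\|f_n\|_\infty\to 0$. Since $q(f)\le\|f\|_\infty$, we get $q(f_n)\to 0$, which completes the argument. The essential point is the use of the uncountability of $A$ to control the value at $\infty$. Without it, a countable $A$ would allow sequences such as $\chi_{\{a_k:k\ge n\}\cup\{\infty\}}$ that decrease to $0$ in order but not uniformly.
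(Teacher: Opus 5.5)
Your proposal is correct and follows the paper's proof essentially step by step. You first get pointwise convergence on $A$ from the lower bound $c\chi_{\{a\}}$, then show $f_n(\infty)\to 0$ by using uncountability of $A$ to pick a point outside the countably many finite exceptional sets, and finally apply Dini's theorem on $A_\infty$; the only cosmetic difference is that in the second step you contradict your first step rather than exhibiting the lower bound $(d/2)\chi_{\{a\}}$ directly.
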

\begin{proof}
    Let $(f_n)\subseteq C(A_\infty )$ be a decreasing sequence
    satisfying $f_n \downarrow 0$. We show that $f_n\to 0$ in the
    $\tau$ topology; equivalently, that $q(f_n)\to 0$ and
    $q_a(f_n)\to 0$ for every $a \in A'$.

    First we show that $f_n(a)\to 0$ for every $a \in A$. Suppose not.
    Then there would exist $a \in A$ and $\varepsilon >0$ such that
    $f_n(a)\ge \varepsilon $ for infinitely many $n \in \N$; since
    $(f_n(a))$ is a decreasing sequence, actually $f_n(a)\ge
    \varepsilon $ for all $n \in \N$. But then $\varepsilon \chi
    _{\{a\}}$ is a nonzero lower bound for $f_n$, contradicting the
    assumption. This shows, in particular, that $q_a(f_n)=|f_n(a)|\to
    0$ for every $a \in A'$.

    We also have $f_n(\infty )\to 0$. Again, suppose this were not the
    case. Then there would exist an $\varepsilon >0$ such that
    $f_n(\infty )\ge \varepsilon $ for all $n \in \N$. By continuity
    at $\infty $, for each $n \in \N$ there exists a finite
    $F_n\subseteq A$ such that
    \[
    f_n(a)\ge \varepsilon /2\quad\text{for all }a \in A\setminus F_n.
    \]
    Since $A$ is uncountable, we may choose $a \in A\setminus
    \bigcup_{n \in \N} F_n$. Then $(\varepsilon /2)\chi _{\{a\}}$ is a
    nonzero lower bound of $(f_n)$, again contradicting the
    assumption.

    Thus we have a decreasing sequence $(f_n)$ of
    continuous functions that converges pointwise to $0$ on the
    compact space $A_\infty $. By Dini's theorem, $(f_n)$
    converges uniformly to $0$. In particular,  $q(f_n)=\sup_j
    |f_n(b_j)|$ tends to $0$.
\end{proof}

Let $c$ denote the space of real convergent sequences, with its usual
Banach lattice structure. We next show that $c \times
\R^{A'}$, with its natural structure as the product of two (complete) Hausdorff
locally solid vector lattices, is the completion of $(C(A_\infty ),
\tau )$.

\begin{lem}
    The Hausdorff locally solid vector lattice $c \times \R^{A'}$,
    together with the embedding
    \[
    \begin{array}{cccc}
    J\colon& C(A_\infty ) & \longrightarrow & c \times \R^{A'} \\
            & f & \longmapsto & ((f(b_j))_{j \in \N}, (f(a))_{a \in
            A'}) \\
    \end{array},
    \]
    is the topological completion of $(C(A_\infty ),\tau )$.
\end{lem}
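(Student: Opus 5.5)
We follow the three-item checklist from the preliminaries.

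\emph{Completeness of $c\times\R^{A'}$.} The space $c$ is a closed sublattice of $\ell_\infty$, hence a Banach lattice, and $\R^{A'}$ with the product topology is a complete Hausdorff locally convex-solid vector lattice. A finite product of complete Hausdorff locally solid vector lattices, with the product order and product topology, is again complete, Hausdorff and locally solid, so item (i) holds. We will use that the product topology is generated by the lattice seminorms $\|(x,y)\|_c$ (sup norm on the first coordinate) and $|y_a|$ for $a\in A'$.

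\emph{$J$ is a lattice homomorphism and an embedding.} $J$ is well defined because $f(b_j)\to f(\infty)$, since $f$ is continuous at $\infty$ and the $b_j$ are distinct. Linearity and preservation of lattice operations are clear, since the lattice operations are pointwise on both sides. For the topology, note that $\|(J f)_1\|_\infty=q(f)$ and $|(Jf)_a|=q_a(f)$. Thus $J$ pulls the generating seminorms of the product back exactly onto the generating seminorms of $\tau$. Consequently $J$ is continuous, and $J$ maps basic $\tau$-neighbourhoods of zero onto traces on $J(C(A_\infty))$ of basic neighbourhoods. Injectivity follows from the Hausdorff argument given before the lemma. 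Hence $J$ is a topological embedding.

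\emph{Density of $J(C(A_\infty)_+)$ in $(c\times\R^{A'})_+$.} This is the main step. Fix $(x,y)\ge 0$ with $x=(x_j)\in c$ and $\lim x_j=L\ge 0$. Fix also a basic neighbourhood, determined by $\varepsilon>0$ and a finite $F\subseteq A'$. We define $f\colon A_\infty\to\R$ by
\[
f(b_j)=x_j,\qquad f(a)=y_a \text{ for } a\in F,\qquad f(a)=L \text{ for } a\in A'\setminus F,\qquad f(\infty)=L.
\]
Then $f\ge 0$. The set $\{a: |f(a)-L|\ge\delta\}$ is contained in $F$ together with finitely many $b_j$, since $x_j\to L$, so it is finite and $f$ is continuous. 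Moreover $\|x-(Jf)_1\|_\infty=0$ and $(Jf)_a=y_a$ for $a\in F$. Hence $Jf$ lies in the prescribed neighbourhood of $(x,y)$; in fact $Jf$ agrees with $(x,y)$ on every seminorm in the basic set. This proves item (iii).

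The only subtle point is the continuity of $f$ at $\infty$: it is exactly why we assign the value $L=\lim x_j$ both at $\infty$ and off $F$, and why the first factor must be $c$ rather than $\ell_\infty$. Having verified the three checklist items, we conclude that $(c\times\R^{A'},J)$ is the completion.
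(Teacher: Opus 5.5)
Your proposal is correct and follows the paper's proof essentially step for step. It uses the same three-item checklist, the same observation that $J$ pulls the product seminorms back exactly onto $q$ and $q_a$, and the same density construction assigning the value $L=\lim_j x_j$ at $\infty$ and on $A'\setminus F$; you are also slightly more explicit than the paper about why $Jf\in c$ and why the product is complete, points the paper treats as straightforward.
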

\begin{proof}
    Recall that convergence in $c \times \R^{A'}$ is uniform on $c$
    and pointwise on $\R^{A'}$. It is straightforward to check this space is topologically complete.
    The map $J$ is a vector lattice homomorphism, and it is injective
    because $f(b_j)\to f(\infty )$. Moreover, by the definition of
    $\tau$, $J$ is a homeomorphism onto its image.

    It only remains to show that $J(C(A_\infty )_+)$ is dense in
    $(c\times \R^{A'})_+$. Let $g=(g_1,g_2) \in (c \times
    \R^{A'})_+$, and let $U$ be a neighborhood of $g$. There exist
    $\varepsilon >0$, $m \in \N$, and $a_1,\ldots ,a_m \in A'$ such that
    \begin{multline*}
    \{\, h=(h_1,h_2) \in c\times \R^{A'} :\\ \|g_1-h_1\|_\infty
    <\varepsilon,\ |g_2(a_i)-h_2(a_i)|<\varepsilon
    \text{ for }i=1,\ldots,m\,\}
    \end{multline*}
    is contained in $U$.
    Define $f\colon A_\infty \to \R$ by
    \begin{align*}
        &f(b_j)=g_1(j)\quad \text{for }j \in \N,\\
        &f(a_i)=g_2(a_i)\quad \text{for }i=1,\ldots ,m,\\
        &f(a)=\lim_j g_1(j)\quad \text{for }a \in A'\setminus
    \{a_1,\ldots ,a_m\},\\
        &f(\infty )=\lim_j g_1(j).
    \end{align*}
    Then $f \in C(A_\infty )_+$. Moreover, the first coordinate of
    $J(f)$ agrees with $g_1$, while the second agrees with $g_2$ on
    $\{a_1,\ldots ,a_m\}$. Hence
    $J(f) \in U$.
\end{proof}

However, $c\times \R^{A'}$ does not have the $\sigma$-Lebesgue
property. Indeed, for $n \in \N$, let $x_n \in c$ be the sequence
whose first $n$ entries are $0$ and whose remaining entries are $1$.
Then $x_n \downarrow 0$, but $\|x_n\|_\infty =1$ for every
$n \in \N$. Consequently, $(x_n,0)\downarrow 0$ in
$c\times \R^{A'}$, but this sequence does not converge to $0$ in the
product topology. In summary, we have proved the following.

\begin{thm}<\gh[theorem2_5]{https://github.com/davidmunozlahoz/Aliprantis/blob/526881fef80400db04f8f3c736dc64d3fdc00f6c/Aliprantis/Q1.lean\#L1008}>There exists a Hausdorff $\sigma$-Lebesgue locally convex-solid
    vector lattice whose topological completion is not
    $\sigma$-Lebesgue.
\end{thm}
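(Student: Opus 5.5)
The statement is now just an assembly of the two lemmas above and the closing computation in $c\times\R^{A'}$; the plan is to combine them with the uniqueness of the completion. I take $X=(C(A_\infty),\tau)$ for an uncountable set $A$ with a fixed countably infinite $B\subseteq A$.

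First, $X$ is a Hausdorff locally convex-solid vector lattice. It is Hausdorff by the discussion preceding the first lemma. It is locally convex-solid because $\tau$ is generated by the family $\{q\}\cup\{q_a\colon a\in A'\}$ of lattice seminorms. By the first lemma of this section, $X$ is $\sigma$-Lebesgue. This is the only step with real content: there we used the uncountability of $A$ to force pointwise convergence to $0$ (including at $\infty$), and then Dini's theorem on the compact space $A_\infty$ to control $q$.

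Second, by the second lemma, the pair $(c\times\R^{A'},J)$ satisfies the checklist from the preliminaries. It is therefore the topological completion of $X$, and by uniqueness up to isomorphism it carries the completion's locally solid lattice structure. Here one should note that the order on $c\times\R^{A'}$ is the coordinatewise one, and that it coincides with the order given by the closure of $J(X_+)$. This holds because the checklist guarantees that $(c\times\R^{A'})_+=\overline{J(X_+)}$.

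Third, to see that the completion fails the $\sigma$-Lebesgue property, I take $x_n=\chi_{\{j>n\}}\in c$ and consider $(x_n,0)$. This sequence is decreasing. Its infimum is $0$: any lower bound $(y,z)$ has $z\le 0$, and $y(j)\le x_n(j)=0$ for all $j\le n$, for every $n$, so $y\le 0$. On the other hand, $\|x_n\|_\infty=1$ for every $n$, so $(x_n,0)\not\to 0$ in the product topology. Since the uniqueness of completions transports both order and topology, this shows that the topological completion of $X$ is not $\sigma$-Lebesgue, which proves the theorem.
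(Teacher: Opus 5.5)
Your proposal is correct and follows the paper's argument. The first lemma gives the $\sigma$-Lebesgue property of $(C(A_\infty),\tau)$, the second identifies the completion as $c\times\R^{A'}$ via the checklist, and the sequence $(\chi_{\{j>n\}},0)$ decreases to $0$ while its first coordinate keeps uniform norm $1$; your explicit check that the infimum is $0$ and that the coordinatewise order agrees with the completion's order fills in details the paper leaves implicit.
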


\section{Question 2}\label{sec:q2}

\subsection{Background}

Aliprantis introduced the following property in
\cite[Section 1]{aliprantis1974} as a generalization of the analogous
property for normed vector spaces introduced by W.\ A.\ J.\ Luxemburg
and A.\ C.\ Zaanen in \cite[Section 34]{luxemburg_zaanen1964_X}.

\begin{defn}\label{def:property(B,i)}
    We say that a locally solid vector lattice $X$ satisfies
    \emph{property (B, i)} if every sequence $(x_n)\subseteq X_+$ that
    is topologically bounded and increasing is Cauchy.
\end{defn}

When $X$ is a Banach lattice equipped with its norm topology, property
(B, i) is equivalent to $X$ being a KB-space: every increasing,
norm-bounded sequence in $X_+$ converges. Property (B, i) was
introduced together with its analogue for nets, property (B, ii). They
are the topologically bounded counterparts of properties (A, iii) and
(A, iv), respectively, which require every order-bounded increasing
sequence and net to be Cauchy. The latter properties
were first introduced for normed vector lattices
\cite[Section 33]{luxemburg_zaanen1964_X} to prove results related to
a theorem of H.\ Nakano. Aliprantis then showed that property (B, i)
is preserved under topological completion for metrizable locally solid
vector lattices.

\begin{thm}[\theoremcite{Theorem 5.3}{aliprantis1974}]
    A metrizable locally solid vector lattice has property (B, i) if
    and only if its topological completion has property (B, i).
\end{thm}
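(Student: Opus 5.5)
The statement to prove is Aliprantis's Theorem 5.3: a metrizable locally solid vector lattice $X$ has property (B, i) if and only if its completion $\hat{X}$ does. Because $X$ is metrizable, so is $\hat{X}$, and both admit a countable base of solid neighborhoods of zero. We identify $X$ with $J(X)$ and fix a countable decreasing base $(V_k)$ of solid neighborhoods of zero in $\hat{X}$, with $V_{k+1}+V_{k+1}\subseteq V_k$. The traces $U_k=V_k\cap X$ then form a base for $X$.

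\emph{From $\hat{X}$ to $X$.} Suppose $\hat{X}$ has property (B, i), and let $(x_n)\subseteq X_+$ be increasing and topologically bounded in $X$. Then $(x_n)$ is also increasing and bounded in $\hat{X}$, because $J$ is a lattice homomorphism and an embedding, and the closure of a bounded set is bounded. Hence $(x_n)$ is Cauchy in $\hat{X}$. Since $J$ is a topological embedding, it is Cauchy in $X$ as well. This direction does not use metrizability.

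\emph{From $X$ to $\hat{X}$.} Suppose $X$ has property (B, i), and let $(\hat{x}_n)\subseteq\hat{X}_+$ be increasing and bounded. By density of $X_+$ in $\hat{X}_+$, choose $y_n\in X_+$ with $y_n-\hat{x}_n\in V_n$; continuity of the lattice operations is not even needed here. The $y_n$ need not be increasing, so set $z_n=y_1\vee\dots\vee y_n\in X_+$, which is increasing. The key estimate is
\[
|z_n-\hat{x}_n|=\Bigl|\bigvee_{i\le n}y_i-\bigvee_{i\le n}\hat{x}_i\Bigr|\le\sum_{i\le n}|y_i-\hat{x}_i|,
\]
which uses $\hat{x}_n=\bigvee_{i\le n}\hat{x}_i$. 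Since $\sum_{i\le n}|y_i-\hat{x}_i|\in V_1+V_2+\dots+V_n\subseteq V_0$, and $V_0$ is solid, we get $z_n-\hat{x}_n\in V_0$. A better bound is needed for convergence. For $m\le n$, write $z_n=z_m\vee\bigvee_{m<i\le n}y_i$. Then
\[
\bigl|z_n-(z_m\vee\hat{x}_n)\bigr|\le\sum_{m<i\le n}|y_i-\hat{x}_i|\in V_m,
\]
and $|z_m\vee\hat{x}_n-\hat{x}_n|\le|z_m-\hat{x}_m|$, because $\hat{x}_m\le\hat{x}_n$ and $a\mapsto a\vee\hat{x}_n$ is contractive in modulus. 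Iterating this tail estimate shows that $z_n-\hat{x}_n$ stays inside $V_0$ uniformly, and it is then enough to handle boundedness and the Cauchy property.

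\emph{Boundedness of $(z_n)$.} Given $k$, pick $\lambda>0$ with $\lambda\hat{x}_n\in V_k$ for all $n$; we may take $\lambda\le1$. Then $\lambda z_n\in V_k+\lambda V_0$, which is not quite small. The fix is to rerun the construction with the rescaled approximation $y_n-\hat{x}_n\in V_{n+j}$, but boundedness needs control at all scales at once. This is the main obstacle. The route I would take is to define the approximants more carefully, as $z_n=\bigvee_{i\le n}y_i$ with $y_i$ chosen so that $\sum_{i\ge1}|y_i-\hat{x}_i|$ converges in $\hat{X}$. By metrizability and completeness, take $y_i-\hat{x}_i\in\frac1{2^i}W_i$ for a suitable sequence of neighborhoods. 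Then $d_n=\sum_{i\le n}|y_i-\hat{x}_i|$ is increasing and convergent, so $\{d_n\}$ is bounded. Since $0\le z_n\le\hat{x}_n+d_n$, solidity gives that $\{z_n\}$ is bounded in $\hat{X}$, and hence in $X$.

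\emph{Conclusion.} By (B, i) in $X$, the sequence $(z_n)$ is Cauchy in $X$, hence in $\hat{X}$. Finally, $\hat{x}_n-z_n\to0$: the tail estimate above gives $|z_n-\hat{x}_n|\le|z_m-\hat{x}_m|+(d_n-d_m)$, and $|z_m-\hat{x}_m|$ can itself be controlled using the convergent series $(d_n)$ together with a diagonal choice of indices. It follows that $(\hat{x}_n)$ is Cauchy. Metrizability is used exactly to make these sequential approximations and the convergent series available.
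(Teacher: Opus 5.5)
Your direction from $\hat{X}$ to $X$ is correct, and it is exactly the remark the paper makes. The paper proves nothing further: it cites Aliprantis for the converse, so that part of your argument has to stand on its own. Your boundedness step there is sound: with $y_i-\hat{x}_i\in V_i$ the partial sums $d_n$ converge by completeness, and $0\le z_n\le\hat{x}_n+d_n$ together with solidity gives boundedness.

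The gap is the last step. The claim $\hat{x}_n-z_n\to0$ is false for your construction, because the supremum $z_n=y_1\vee\dots\vee y_n$ never forgets the early approximation errors. Take $X=\hat{X}=\R$, $\hat{x}_n=1$ and $V_k=(-2^{-k},2^{-k})$. The admissible choice $y_1=5/4$, $y_n=1$ for $n\ge2$, gives $z_n=5/4$ for every $n$. Your tail estimate $|z_n-\hat{x}_n|\le|z_m-\hat{x}_m|+(d_n-d_m)$ is correct, but iterating it only gives $|z_n-\hat{x}_n|\le d_n$. No diagonal choice of indices removes the fixed contribution of $|y_1-\hat{x}_1|$ from a single sequence $(z_n)$. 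What your argument actually yields is that $(\hat{x}_n)$ is Cauchy only up to $V_0$, i.e.\ $\hat{x}_n-\hat{x}_m\in V_0+V_0+V_0$ for large $n,m$.

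Either of two repairs closes the gap.

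\emph{Rerun for each $k$.} Choose $y^{(k)}_i$ with $y^{(k)}_i-\hat{x}_i\in V_{k+i}$. Then $|z^{(k)}_n-\hat{x}_n|\le d^{(k)}_n\in V_k$ for all $n$, and $(z^{(k)}_n)_n$ is Cauchy by (B, i). Hence $\hat{x}_n-\hat{x}_m\in V_k+V_k+V_k$ for large $n,m$, and $k$ is arbitrary.

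\emph{Approximate increments instead of taking suprema.} This is cleaner. Set $\hat{u}_1=\hat{x}_1$ and $\hat{u}_k=\hat{x}_k-\hat{x}_{k-1}\ge0$. Choose $u_k\in X_+$ with $u_k-\hat{u}_k\in V_k$, and let $s_n=u_1+\dots+u_n$. Then $(s_n)\subseteq X_+$ is increasing. The error $s_n-\hat{x}_n=\sum_{k\le n}(u_k-\hat{u}_k)$ converges in $\hat{X}$; this is where metrizability and completeness enter. So $(s_n)$ is bounded, hence Cauchy by (B, i), and $\hat{x}_n=s_n-(s_n-\hat{x}_n)$ is Cauchy as a difference of Cauchy sequences.

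The point is that the error sequence must be convergent, not merely small. The sum construction delivers this with a single sequence; the supremum construction does not.
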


Clearly, if the completion satisfies property (B, i), then so does the
original space. This leaves the converse implication open in the
non-metrizable case.

\begin{question}[\theoremcite{Open Problem 1}{aliprantis1974}]
    Is property (B, i) preserved under topological completion without
    assuming metrizability?
\end{question}

\subsection{Solution}

We construct a Hausdorff locally solid vector lattice with property
(B, i) whose completion does not have property (B, i). It will be a
vector sublattice of $c_0\times\R^{PS}$, where
$PS=(\R_+)^{\N}$ is the space of nonnegative sequences, and $c_0$ is
the Banach lattice of real sequences converging to zero equipped with
the uniform norm. With the product topology, $c_0\times\R^{PS}$ is a
complete Hausdorff locally convex-solid vector lattice.

For $j \in \N$, define $a_j=((a_j)_1,(a_j)_2)\in c_0\times\R^{PS}$ by
$(a_j)_1(m)=\delta _{jm}$ for $m\in \N$, where $\delta _{jm}$ denotes the Kronecker
delta, and $(a_j)_2(\alpha)=\alpha(j)$ for $\alpha\in PS$. Let $Y$ be the vector
sublattice generated by $\{\, a_j : j \in \N \, \}$. Let
\[
I=\{\, (0,f)\in c_0\times \R^{PS} : f\text{ is finitely supported} \, \}
\]
be the ideal of finitely supported functions on $PS$, extended by zero
on \(\N\). This is an ideal of $c_0\times\R^{PS}$, so $X=Y+I$ is a
vector sublattice of $c_0\times\R^{PS}$.
Indeed, if
$y\in Y$ and $v\in I$, then the vector-lattice inequality
\[
    \bigl|\,|y+v|-|y|\,\bigr|\leq |v|
\]
shows that $|y+v|-|y|\in I$. Consequently,
$|y+v|\in Y+I$. Since a vector subspace closed under taking moduli is a
vector sublattice, $X=Y+I$ is a vector sublattice of $c_0\times\R^{PS}$.
Equipped with the subspace
topology, $X$ is a Hausdorff locally convex-solid vector lattice.

\begin{lem}
    The topological completion of $X$ is $c_0\times\R^{PS}$, together
    with the natural inclusion.
\end{lem}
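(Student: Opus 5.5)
We use the checklist from the preliminaries. The space $c_0\times\R^{PS}$ is complete, being a product of complete spaces. The inclusion $X\hookrightarrow c_0\times\R^{PS}$ is a lattice homomorphism and a topological embedding, since $X$ is a sublattice carrying the subspace topology. So the only thing left to prove is that $X_+$ is dense in $(c_0\times\R^{PS})_+$.

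Fix $g=(g_1,g_2)\ge 0$ and a basic neighborhood of $g$, determined by $\varepsilon>0$ and finitely many $\alpha_1,\dots,\alpha_m\in PS$. We must find $x\in X_+$ with
\[
\|x_1-g_1\|_\infty<\varepsilon \quad\text{and}\quad |x_2(\alpha_i)-g_2(\alpha_i)|<\varepsilon \text{ for } i=1,\dots,m.
\]
The plan has two steps.
\begin{enumerate}
\item Approximate the $c_0$ coordinate using $Y$. Because $g_1\in c_0$ is nonnegative, we may truncate it to a finitely supported $h\ge 0$ with $\|g_1-h\|_\infty<\varepsilon$, for instance $h(j)=g_1(j)$ for $j\le N$ and $h(j)=0$ otherwise. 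Set $y=\sum_{j\le N}h(j)a_j\in Y_+$. Its first coordinate is exactly $h$. Its second coordinate is the function $\alpha\mapsto\sum_{j\le N}h(j)\alpha(j)$, which is nonnegative because each $\alpha\in PS$ is a nonnegative sequence.
\item Correct the $\R^{PS}$ coordinate using $I$. Put
\[
v=\Bigl(0,\ \sum_{i} \bigl(g_2(\alpha_i)-y_2(\alpha_i)\bigr)\chi_{\{\alpha_i\}}\Bigr)\in I,
\]
where the sum runs over the distinct points among $\alpha_1,\dots,\alpha_m$. Then $x=y+v\in X$ has first coordinate $h$, and its second coordinate equals $g_2$ at each $\alpha_i$ and equals $y_2\ge 0$ elsewhere.
\end{enumerate}

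Positivity of $x$ follows from $h\ge 0$, $g_2\ge 0$ and $y_2\ge 0$. Hence $x\in X_+$ and $x$ lies in the prescribed neighborhood of $g$. This gives the density of $X_+$ in $(c_0\times\R^{PS})_+$, which also yields the density of $X$, and completes the checklist.

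No step here is a real obstacle. The one point needing care is positivity: a correction made only at the points $\alpha_i$ must not create negative values elsewhere. This is why we use finitely supported corrections from $I$, and why we use the fact that all $\alpha\in PS$ are nonnegative, which makes $y_2\ge 0$ automatically.
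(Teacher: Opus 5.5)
Your proposal is correct and follows the paper's proof essentially verbatim: you truncate the $c_0$ coordinate to a finitely supported positive sequence, realize it via $y=\sum_j h(j)a_j\in Y$, and then correct the $\R^{PS}$ coordinate at the finitely many points $\alpha_i$ with an element of $I$. Your explicit remarks on positivity (using $\alpha\ge 0$ for $\alpha\in PS$) and on taking distinct $\alpha_i$ spell out details the paper leaves implicit.
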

\begin{proof}
    It only remains to show that $X_+$ is dense in $(c_0\times\R^{PS})_+$.
    Let $z=(z_1,z_2)\in(c_0\times\R^{PS})_+$, and let $U$ be an open
    neighborhood of $z$. Then
    there exist $\varepsilon >0$, $m \in \N$, and $\alpha _1,\ldots
    ,\alpha _m \in PS$ such that
    \begin{multline*}
    \{\, w=(w_1,w_2)\in c_0\times\R^{PS} :\\
    \|z_1-w_1\|_\infty<\varepsilon,
    \ |z_2(\alpha _i)-w_2(\alpha _i)|<\varepsilon
    \text{ for }i=1,\ldots,m \, \}
    \end{multline*}
    is contained in $U$.
    By retaining only finitely many coordinates of $z_1$, choose a
    positive, finitely supported sequence $y'$ such that
    $\|y'-z_1\|_\infty<\varepsilon$. Let
    $y=\sum_{j \in \N} y'(j)a_j\in Y$, and define $(0,f)\in I$ by
    \[
    f(\alpha _i)=z_2(\alpha _i)-\sum_{j \in \N} y'(j)\alpha _i(j)
    \]
    for $i=1,\ldots,m$, and $f(\alpha)=0$ otherwise. Then
    $x=y+(0,f)\in X_+$. If $x=(x_1,x_2)$, then
    $\|x_1-z_1\|_\infty=\|y'-z_1\|_\infty<\varepsilon$ and
    $x_2(\alpha _i)=z_2(\alpha _i)$ for $i=1,\ldots,m$. Thus, $x\in U$.
\end{proof}

Since $c_0$ is not a KB-space, the space $c_0\times\mathbb{R}^{PS}$ does not have property (B, i):

\begin{lem}
    $c_0\times\R^{PS}$ does not have property (B, i).
\end{lem}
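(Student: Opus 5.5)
To show that $c_0\times\R^{PS}$ fails property (B, i), I will exhibit an explicit increasing sequence in the positive cone that is topologically bounded but not Cauchy. The natural choice lives in the first coordinate. For $n\in\N$, let $s_n\in c_0$ be the sequence whose first $n$ entries equal $1$ and whose remaining entries equal $0$, and put $x_n=(s_n,0)\in(c_0\times\R^{PS})_+$. Since $s_n\le s_{n+1}$ pointwise, the sequence $(x_n)$ is increasing.

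\emph{Boundedness.} I will use that, in the product topology, a set is topologically bounded if and only if each of its coordinate projections is bounded. The second coordinate of every $x_n$ is $0$, which is trivially bounded in $\R^{PS}$. The first coordinates satisfy $\|s_n\|_\infty=1$ for all $n$, so they are norm bounded, hence bounded in $c_0$. To argue this directly, note that every zero neighborhood $U$ of $c_0\times\R^{PS}$ contains a basic set of the form
\[
\{\,(w_1,w_2) : \|w_1\|_\infty<\varepsilon,\ |w_2(\alpha_i)|<\varepsilon \text{ for } i=1,\ldots,m\,\}.
\]
This basic set absorbs $\{x_n\}$ once the scalar exceeds $1/\varepsilon$.

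\emph{Failure of the Cauchy property.} For $n<m$, the first coordinate of $x_m-x_n$ has sup norm $\|s_m-s_n\|_\infty=1$. Hence $x_m-x_n$ never lies in the zero neighborhood $\{(w_1,w_2):\|w_1\|_\infty<1\}$, which is open because the projection onto $c_0$ is continuous. So $(x_n)$ is not Cauchy.

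There is no real obstacle here. The only point needing care is the characterization of topologically bounded sets in the product, namely that boundedness can be checked coordinatewise, or directly via the basic neighborhoods displayed above. Equivalently, one can simply remark that $c_0$ is not a KB-space and that $c_0$ embeds as a sublattice and topological direct summand of the product via $w_1\mapsto(w_1,0)$; under this embedding both boundedness and the Cauchy property are preserved and reflected.
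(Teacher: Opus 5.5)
Your proposal is correct and follows essentially the same argument as the paper: it uses the same sequence $(s_n,0)$, where $s_n$ has its first $n$ entries equal to $1$, and observes that this sequence is positive, increasing and topologically bounded but not Cauchy. Your explicit checks of boundedness (via basic neighborhoods) and of the failure of the Cauchy property (via $\|s_m-s_n\|_\infty=1$) simply fill in details that the paper states without proof.
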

\begin{proof}
    For $n\in\N$, let $s_n\in c_0$ be the sequence whose first $n$
    entries are $1$ and whose remaining entries are $0$. Then the
    sequence $(s_n,0)$ in $c_0\times\R^{PS}$ is positive, increasing, and
    topologically bounded, but it is not Cauchy.
\end{proof}

The difficult part is to show that $X$ has property (B, i).

\begin{lem}
    $X$ has property (B, i).
\end{lem}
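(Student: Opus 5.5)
The plan is to argue by contradiction, exploiting the fact that on all but countably many coordinates $\alpha\in PS$ the second component of an element of $X$ is rigidly determined by its first component. Let $(x_n)\subseteq X_+$ be increasing and topologically bounded. In the product topology, topological boundedness means two things: $\sup_n\|x_{n,1}\|_\infty<\infty$, and $\sup_n x_{n,2}(\alpha)<\infty$ for every $\alpha\in PS$. The second component causes no trouble, since for each $\alpha$ the sequence $(x_{n,2}(\alpha))$ is increasing and bounded, hence convergent. So only the $c_0$-component can fail to be Cauchy.

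First I reduce non-Cauchyness to a concrete condition. The sequence $x_{n,1}\in c_{0,+}$ is increasing and uniformly bounded, so it converges coordinatewise to some $g\in\ell_\infty$. If $g\in c_0$, then $0\le x_{n,1}\le g$ makes the tails uniformly small, and the finitely many remaining coordinates converge; hence $(x_{n,1})$ would be norm Cauchy. Therefore, if $(x_n)$ is not Cauchy, there is $\varepsilon>0$ with $g(j)\ge\varepsilon$ for infinitely many $j$. It follows that for every $N$ there exist $j>N$ and $n$ with $x_{n,1}(j)\ge\varepsilon/2$.

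Next I record the structure of elements of $X$. Write $x_n=y_n+(0,f_n)$ with $y_n\in Y$ and $(0,f_n)\in I$. Each $y_n$ lies in the sublattice generated by $a_1,\dots,a_{N_n}$ for some $N_n$, so $y_n=\varphi_n(a_1,\dots,a_{N_n})$ for a positively homogeneous piecewise-linear (lattice-linear) function $\varphi_n\colon\R^{N_n}\to\R$. Lattice operations act coordinatewise in $c_0\times\R^{PS}$, which gives
\[
x_{n,1}(j)=\varphi_n(e_j)\quad (j\le N_n), \qquad x_{n,2}(\alpha)=\varphi_n(\alpha(1),\dots,\alpha(N_n)) \quad (\alpha\notin \supp f_n).
\]
Each $\varphi_n$ is Lipschitz, say with constant $L_n$ for the $\ell_1$-norm. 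The set $S=\bigcup_n\supp f_n$ is countable.

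The main step is to build some $\alpha\in PS\setminus S$ at which $\sup_n x_{n,2}(\alpha)=\infty$, contradicting boundedness. I would construct inductively indices $n_1<n_2<\cdots$, coordinates $j_1<j_2<\cdots$ and thresholds $T_k$ such that:
\begin{enumerate}
\item $j_{k+1}>N_{n_k}$;
\item $x_{n_k,1}(j_k)=\varphi_{n_k}(e_{j_k})\ge\varepsilon/2$;
\item $T_k\varepsilon/2-L_{n_k}\sum_{i<k}(T_i+1)\ge k$.
\end{enumerate}
Then let $\alpha$ be any element of $PS$ with $\alpha(j_k)=t_k\in[T_k,T_k+1]$ and $\alpha=0$ off $\{j_k\}$.

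By (i), $\varphi_{n_k}$ does not see the coordinates $j_{k+1},j_{k+2},\dots$. Combining homogeneity, the Lipschitz bound, (ii) and (iii),
\[
\varphi_{n_k}(\alpha)\ge t_k\varphi_{n_k}(e_{j_k})-L_{n_k}\sum_{i<k}t_i\ge k.
\]
The freedom in choosing $(t_k)\in\prod_k[T_k,T_k+1]$ yields uncountably many admissible $\alpha$, while $S$ is countable, so some admissible $\alpha$ avoids $S$. For that $\alpha$ we get $x_{n_k,2}(\alpha)=\varphi_{n_k}(\alpha)\ge k$ for all $k$, which is the desired contradiction.

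I expect the main obstacle to be exactly this step. Lattice-linear functions are neither monotone nor additive on the positive cone (for example $(a_1-a_2)^+$ or $a_1\vee a_2$), so placing mass on all the $j_k$ simultaneously does not naively add up. The remedy is the sparse, rapidly growing choice of $\alpha$ above, with each new coordinate lying outside the range that earlier $\varphi_{n_k}$ depend on. A second point needing care is the countability argument that lets $\alpha$ escape the supports of all the $f_n$.
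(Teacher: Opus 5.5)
Your proof is correct and follows the paper's strategy: reduce non-Cauchyness to $\sup_n x_{n,1}\notin c_0$, collect the countably many supports of the $I$-parts, and build a sparse $\alpha\in PS$ whose large values sit at coordinates beyond the supports of the earlier $y_{n_k}$, so that the second components blow up along a subsequence. The only differences are technical. The paper fixes $\alpha$ stage by stage using positive homogeneity and continuity of $\Phi_j$, and avoids the countable set $E$ by the diagonal choice $t_j\neq\beta^{(j)}(m_j)$; you instead use an explicit Lipschitz bound with thresholds $T_k$ chosen in advance, which makes the estimate uniform over $\prod_k[T_k,T_k+1]$ and lets you avoid the countable set by a cardinality argument.
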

\begin{proof}
    Let $(x_n)\subseteq X_+$ be an increasing, topologically bounded
    sequence. We need to show that it is Cauchy. Write
    \[
        x_n=y_n+(0,z_n)=((x_n)_1,(x_n)_2),
    \]
    where $y_n=((y_n)_1,(y_n)_2)\in Y$ and $(0,z_n)\in I$. Thus
    $(x_n)_1=(y_n)_1$ and $(x_n)_2=(y_n)_2+z_n$. Notice that
    $((x_n)_2)$ is immediately Cauchy in the product topology of
    $\R^{PS}$: for every $\alpha\in PS$, the sequence
    $((x_n)_2(\alpha))$ is increasing and bounded, hence Cauchy. Thus,
    it only remains to show that $((x_n)_1)$ is Cauchy in the uniform
    norm.

    Since $((x_n)_1)$ is uniformly bounded, the sequence
    \[
        u(k)=\sup_n (x_n)_1(k)\qquad(k\in\N)
    \]
    belongs to $\ell_\infty$. If $((x_n)_1)$ is Cauchy, then $u$ is
    the limit of this sequence in the uniform norm, and therefore
    belongs to $c_0$. Conversely, if $u\in c_0$, then it follows from
    Dini's theorem that $((x_n)_1)$ converges uniformly to $u$ (when
    viewed as continuous functions on the one-point compactification of
    $\N$), and
    therefore that it is Cauchy. In summary, $((x_n)_1)$, and in fact
    $(x_n)$, is Cauchy if and only if $u\in c_0$.

    Suppose, to reach a contradiction, that $(x_n)$ is not Cauchy.
    Then $u\notin c_0$. Let
    \[
        E=\bigcup_{n\in\N}\supp z_n\subseteq PS.
    \]
    This set is countable, and for every $\alpha\in PS\setminus E$,
    we have $(x_n)_2(\alpha)=(y_n)_2(\alpha)$ for all $n\in\N$.
    We are going to use the fact that $u\notin c_0$ to construct a point
    $\alpha\in PS\setminus E$ and a subsequence $(x_{n_j})$ such that
    \[
        (x_{n_j})_2(\alpha)=(y_{n_j})_2(\alpha)>j
        \qquad(j\in\N),
    \]
    thus contradicting the topological boundedness of $(x_n)$.
    Since $u\notin c_0$, there is $\varepsilon>0$ such that, for every
    finite $F\subseteq\N$, some $k\in\N\setminus F$ satisfies
    $u(k)>\varepsilon$. Consequently, for every finite
    $F\subseteq\N$ and every $N\in\N$, there are $k\in\N\setminus F$
    and $n\ge N$ such that $(x_n)_1(k)>\varepsilon$.

    For every $n\in\N$, let $S_n\subseteq\N$ be a finite set such that $y_n$ belongs to the sublattice generated by $\{a_i:i\in S_n\}$. In particular, $(y_n)_1$ is supported on
    $S_n$. We are going to use the preceding observation inductively to
    construct strictly increasing sequences
    $(n_j)$ and $(m_j)$ such that
    \[
        (x_{n_j})_1(m_j)=(y_{n_j})_1(m_j)>\varepsilon
        \quad\text{and}\quad
        m_j\notin\bigcup_{i<j}S_{n_i}.
    \]
    The construction is straightforward. Having chosen $n_i$ and $m_i$
    for $i<j$, let
    \[
        H_{j-1}=\bigcup_{i<j}S_{n_i}\cup\{1,\ldots,m_{j-1}\},
    \]
    where $n_0=m_0=1$. Applying the preceding observation with
    $F=H_{j-1}$ and $N=n_{j-1}+1$, choose $m_j\notin H_{j-1}$ and
    $n_j>n_{j-1}$ such that
    $(x_{n_j})_1(m_j)>\varepsilon$. Since $(y_{n_j})_1$ is supported on
    $S_{n_j}$, we have $m_j\in S_{n_j}$.

    We now construct $\alpha\in PS$. This sequence will be
    supported on $\{m_j:j\in\N\}$, and its values will be chosen in
    order: at stage $j$, all coordinates in
    $\bigcup_{i<j}S_{n_i}$ have already been assigned. Let
    $\Phi_j\colon\R^{S_{n_j}}\to\R$ be the
    lattice-linear expression defining $y_{n_j}$, and let
    $w_j\in\R^{S_{n_j}}$ agree with the values of $\alpha$ already
    assigned on $S_{n_j}\cap\bigcup_{i<j}S_{n_i}$ and be zero
    elsewhere. If $e_{m_j}$ denotes the standard unit vector at
    $m_j$, then
    \[
        \Phi_j(e_{m_j})=(y_{n_j})_1(m_j)>\varepsilon.
    \]
    By positive homogeneity and continuity,
    \[
        \frac{\Phi_j(w_j+te_{m_j})}{t}
        =\Phi_j\left(\frac{w_j}{t}+e_{m_j}\right)
        \longrightarrow \Phi_j(e_{m_j})
        \quad \text{as }t\to\infty.
    \]
    If $E$ is nonempty, write
    $E=\{\beta^{(j)}:j\in\N\}$, allowing repetitions if $E$ is
    finite. Hence, by the convergence above, we may choose $t_j>0$ such that
    $\Phi_j(w_j+t_je_{m_j})>j$ and satisfying that
    $t_j\neq\beta^{(j)}(m_j)$ (if $E$ were empty we just disregard this latter requirement). Set $\alpha(m_j)=t_j$ and assign the
    value $0$ to every still unassigned coordinate in
    $S_{n_j}\setminus\{m_j\}$. After all stages, assign the value $0$
    to every remaining coordinate. Hence
    \[
    (\alpha(i))_{i\in S_{n_j}}=w_j+t_je_{m_j}.
    \]

    The $\alpha$ thus constructed certainly belongs to $PS$, and is
    such that $\alpha\notin E$, since $\alpha$ differs from
    $\beta^{(j)}$ at coordinate $m_j$, for each $j\in\N$.
    Moreover, since $(y_{n_j})_2$ only depends on the values of $\alpha$ at $S_{n_j}$, we have that
    \[
    (y_{n_j})_2(\alpha)
      =\Phi_j\bigl((\alpha(i))_{i\in S_{n_j}}\bigr)
      =\Phi_j(w_j+t_je_{m_j})>j.
    \]
    Since $\alpha\notin E$, $(x_{n_j})_2(\alpha)=(y_{n_j})_2(\alpha)
    >j$, thus showing that $((x_n)_2)$ is not bounded on $\R^{PS}$.
\end{proof}

Thus we have established the following.

\begin{thm}<\gh[theorem3_6]{https://github.com/davidmunozlahoz/Aliprantis/blob/526881fef80400db04f8f3c736dc64d3fdc00f6c/Aliprantis/Q2.lean\#L1609}>There exists a Hausdorff locally convex-solid vector lattice with
    property (B, i) whose topological completion does not have
    property (B, i).
\end{thm}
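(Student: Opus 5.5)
The theorem is obtained by assembling the three lemmas just proved about the space $X=Y+I\subseteq c_0\times\R^{PS}$. The plan has three steps, carried out in order.

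\textbf{Step 1: $X$ is admissible.} We check that $X$ is a Hausdorff locally convex-solid vector lattice. It is a vector sublattice of $c_0\times\R^{PS}$, by the inequality $\bigl|\,|y+v|-|y|\,\bigr|\le |v|$ together with the fact that $I$ is an ideal. The ambient topology is generated by lattice seminorms: the sup norm on the $c_0$ factor and the evaluations $w\mapsto|w_2(\alpha)|$ for $\alpha\in PS$. Hence the subspace topology on $X$ is generated by their restrictions, which are still lattice seminorms. This topology is Hausdorff because these seminorms separate points.

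\textbf{Step 2: identify the completion.} We invoke the lemma stating that the topological completion of $X$ is $c_0\times\R^{PS}$ with the natural inclusion. This rests on the checklist from the preliminaries. The ambient space is complete, as a product of the complete spaces $c_0$ and $\R^{PS}$. The inclusion is a lattice embedding. Finally, $X_+$ is dense in $(c_0\times\R^{PS})_+$.

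\textbf{Step 3: compare property (B, i).} The lemma that $X$ has property (B, i) gives the positive half. The lemma exhibiting $(s_n,0)$ gives the negative half: this sequence is positive, increasing and topologically bounded in $c_0\times\R^{PS}$, but not Cauchy. Combining these three facts yields the theorem.

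\textbf{Main obstacle.} The substantive step is the lemma that $X$ has property (B, i); the rest is bookkeeping. Its proof shows that failure of the Cauchy property in the $c_0$ coordinate forces unboundedness at a suitably constructed point $\alpha\notin E$. The key is that each $(y_n)_2$ depends only on the finitely many coordinates of $\alpha$ indexed by $S_n$, which allows the inductive diagonal choice of the values $\alpha(m_j)=t_j$. Since that lemma is already established, the remaining work here is only to cite the three lemmas in the order above.
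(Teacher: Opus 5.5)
Your proposal is correct and takes the same route as the paper. The theorem is just the assembly of four facts: $X=Y+I$ is a Hausdorff locally convex-solid sublattice, its completion is $c_0\times\R^{PS}$, $X$ has property (B, i), and the sequence $(s_n,0)$ witnesses the failure of (B, i) in the completion. You also correctly single out the (B, i) lemma, with its diagonal construction of $\alpha\notin E$ from the finite sets $S_n$, as the only substantive step.
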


\section{Question 3}\label{sec:q3}

\subsection{Background}

Let $X$ be a Hausdorff locally solid vector lattice, and let $\hat{X}$
be its topological completion. We call an element $\hat{x}\in\hat{X}$
an \emph{upper element} if there exists an increasing sequence
$(x_n)\subseteq X_+$ such that $Jx_n$ converges to $\hat{x}$. The following result was
first proved for normed vector lattices by W.\ A.\ J.\ Luxemburg
\cite[Theorem 60.3]{luxemburg1965} and later extended by Aliprantis to
metrizable locally solid vector lattices.

\begin{thm}[\theoremcite{Lemma 4.1}{aliprantis1974}]\label{thm:lux}
    Let $X$ be a metrizable locally solid vector lattice, and let
    $\hat{X}$ be its topological completion. Then every element of
    $\hat{X}_+$ is the limit of a decreasing sequence of upper
    elements.
\end{thm}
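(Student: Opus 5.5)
The plan is to build the upper elements from a fast-converging approximating sequence, adding the absolute values of the increments so that each partial construction is increasing. Since $X$ is metrizable, so is $\hat{X}$. I would fix a countable base $(V_k)$ of closed solid neighborhoods of zero in $\hat{X}$ with $V_{k+1}+V_{k+1}\subseteq V_k$; such bases exist because closures of solid neighborhoods in a locally solid space are solid. Put $U_k=J^{-1}(V_k)$, which are solid neighborhoods of zero in $X$ because $J$ is a lattice embedding. The key consequence is that for elements $d_k$ with $Jd_k\in V_{k+1}$, every finite sum $\sum_{k=n}^{m}J|d_k|$ lies in $V_n$. This follows from the halving property and solidity, since $|Jd_k|=J|d_k|$.

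Fix $\hat{x}\in\hat{X}_+$. The checklist in the preliminaries says that $J(X_+)$ is dense in $\hat{X}_+$, and $\hat{X}$ is first countable. So there is a sequence $(x_n)\subseteq X_+$ with $Jx_n\to\hat{x}$. Passing to a subsequence, I will assume $J(x_{k+1}-x_k)\in V_{k+1}$ for all $k$, and write $d_k=x_{k+1}-x_k$. For $m>n$ define
\[
y_{n,m}=x_n+\sum_{k=n}^{m-1}|d_k|\in X_+ .
\]

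The elements $y_{n,m}$ have the following properties.
\begin{enumerate}
\item For fixed $n$, the sequence $(y_{n,m})_m$ is increasing.
\item It is Cauchy, since for $m'>m$ we have $J(y_{n,m'}-y_{n,m})=\sum_{k=m}^{m'-1}J|d_k|\in V_m$.
\end{enumerate}
By completeness, $Jy_{n,m}\to\hat{u}_n$ as $m\to\infty$, so each $\hat{u}_n$ is an upper element. Next,
\[
y_{n,m}-y_{n+1,m}=|d_n|-d_n\ge 0 .
\]
Since $\hat{X}_+$ is closed, this gives $\hat{u}_n\ge\hat{u}_{n+1}$, so $(\hat{u}_n)$ is decreasing.

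It remains to show that $\hat{u}_n\to\hat{x}$. Since $x_m=x_n+\sum_{k=n}^{m-1}d_k$, we get
\[
y_{n,m}-x_m=\sum_{k=n}^{m-1}\bigl(|d_k|-d_k\bigr).
\]
Each term satisfies $0\le|d_k|-d_k\le 2|d_k|$. Hence the $J$-image of this difference is dominated by $2\sum_{k=n}^{m-1}J|d_k|\in V_n+V_n\subseteq V_{n-1}$, and by solidity it lies in $V_{n-1}$. Letting $m\to\infty$ and using that $V_{n-1}$ is closed yields $\hat{u}_n-\hat{x}\in V_{n-1}$. Therefore $\hat{u}_n\downarrow$ and $\hat{u}_n\to\hat{x}$.

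The step I expect to need most care is the bookkeeping with neighborhoods. I need closed solid neighborhoods in $\hat{X}$ and a geometric-series control of the sums of $|d_k|$. The lattice identities themselves are routine.
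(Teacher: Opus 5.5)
Your proof is correct and complete. The paper gives no proof of this statement: it is quoted from \cite[Lemma 4.1]{aliprantis1974}, extending \cite[Theorem 60.3]{luxemburg1965}, only as background for Question 3. So there is no in-paper route to compare with.

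Your argument is the natural series-type construction, with norm-summability of the increments replaced by a halving chain of closed solid neighborhoods. The three steps all check out:
\begin{enumerate}
\item The elements $y_{n,m}=x_n+\sum_{k=n}^{m-1}|d_k|$ increase in $m$, and their images are Cauchy.
\item The identity $y_{n,m}-y_{n+1,m}=|d_n|-d_n\ge 0$ is constant in $m$, so it makes the limits $\hat{u}_n$ decrease.
\item The bound $0\le J(y_{n,m}-x_m)\le 2\sum_{k=n}^{m-1}J|d_k|$, together with solidity and closedness of $V_{n-1}$, gives $\hat{u}_n-\hat{x}\in V_{n-1}$. In fact $\hat{u}_n\ge\hat{x}$, so $\hat{u}_n\downarrow\hat{x}$.
\end{enumerate}

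The auxiliary facts you rely on also hold. Closures of solid sets are solid in a locally solid space, so the chain $(V_k)$ exists. Solid sets are symmetric, which is what the subsequence extraction needs. And $\hat{X}_+$ is closed.

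Two cosmetic points: the estimate $\hat{u}_n-\hat{x}\in V_{n-1}$ needs $n\ge 2$ (or start the chain at $V_0$), and the sets $U_k=J^{-1}(V_k)$ are never used.

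Metrizability enters your argument in only two places. First, first countability of $\hat{X}$ gives a sequence in $J(X_+)$ converging to $\hat{x}$. Second, the countable halving base forces $\sum_k J|d_k|$ to converge. The counterexample in \cref{sec:q3} confirms that the conclusion fails without this hypothesis.
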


\begin{question}[\theoremcite{Open Problem 2}{aliprantis1974}]
    Does \cref{thm:lux} remain true without assuming metrizability?
\end{question}

\subsection{Solution}

We construct a Hausdorff locally solid vector lattice whose completion
contains a positive element that is not the limit of a decreasing
sequence of upper elements.

The remaining counterexamples share a common flavor. We introduce some
notation and facts that will be useful throughout. Let $S$ be a
topological space. We denote by $C_k(S)$ the vector lattice $C(S)$ of
real-valued continuous functions on $S$, equipped with its usual
pointwise vector lattice structure and the Hausdorff locally
convex-solid topology induced by the family of seminorms
\[
p_K(f)=\sup_{x \in K}|f(x)|\qquad (f \in C_k(S)),
\]
where $K$ ranges over the compact subsets of $S$. In the literature, this is usually called the
compact-open topology.

Whenever $S$ and $T$ are topological spaces, we write $S\sqcup T$ for
their disjoint union with its standard topology. We can identify $S$ and
$T$ with their canonical homeomorphic copies in $S\sqcup T$, so that
an element of $S\sqcup T$ is either an element of $S$ or an element of
$T$. With this convention, and the analogous convention for subsets
and disjoint unions, every open subset of $S\sqcup T$ has the form
$U\sqcup V$, where $U\subseteq S$ and $V\subseteq T$ are open.
Similarly, every compact subset of $S\sqcup T$ has the form $K\sqcup
L$, where $K\subseteq S$ and $L\subseteq T$ are compact. A map
$f\colon S\sqcup T\to R$, where $R$ is a topological space, is
continuous if and only if both $f|_S$ and $f|_T$ are continuous. In
that case, we write $f=f|_S\sqcup f|_T$.

\begin{prop}\label{prop:Cdisjunion}
    Let $M$ be a metric space, let $P\subseteq M$, and set
    $Q=M\setminus P$. Consider the injective map
    \[
    \begin{array}{cccc}
    J\colon& C(M) & \longrightarrow & C_k(P\sqcup Q) \\
           & f & \longmapsto & f|_P\sqcup f|_Q \\
    \end{array}.
    \]
    Then $C(M)$, equipped with its usual pointwise vector lattice
    structure and the subspace topology induced by $J$, is a Hausdorff
    locally convex-solid vector lattice. Moreover, $C_k(P\sqcup Q)$,
    together with the embedding $J$, is the topological completion of
    $C(M)$.
\end{prop}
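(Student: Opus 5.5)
The plan is to run through the three-item checklist from the preliminaries. First I will dispose of the routine points.

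$J$ is well defined, since $f|_P$ and $f|_Q$ are continuous on the subspaces $P$ and $Q$. It is injective because $P\cup Q=M$. It is clearly linear and preserves finite suprema and infima pointwise, so it is a vector lattice homomorphism. The topology on $C(M)$ is the one induced by the lattice seminorms $f\mapsto p_{K\sqcup L}(Jf)$. These separate points because singletons are compact, so $C(M)$ is a Hausdorff locally convex-solid vector lattice. By construction $J$ is a topological embedding.

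Next I will show that $C_k(P\sqcup Q)$ is complete, which is the standard completeness of $C_k(S)$ for $S$ a $k$-space. Here $P\sqcup Q$ is metrizable (a disjoint union of metric spaces), hence a $k$-space. Given a Cauchy net $(g_\alpha)$, it is uniformly Cauchy on each compact set. So it converges pointwise to some $g$, uniformly on compacts. Thus $g$ is continuous on every compact subset, and in a metric space this means $g$ is sequentially continuous (a convergent sequence together with its limit is compact), hence continuous. Therefore $g_\alpha\to g$ in $C_k$.

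The main step is density of $J(C(M)_+)$ in $C_k(P\sqcup Q)_+$. Fix $g=g_1\sqcup g_2\ge 0$, a compact set $K\sqcup L$ with $K\subseteq P$, $L\subseteq Q$ compact, and $\varepsilon>0$. I need $f\in C(M)_+$ with $|f-g_1|<\varepsilon$ on $K$ and $|f-g_2|<\varepsilon$ on $L$. The key observation is that $K$ and $L$ are disjoint compact subsets of the metric space $M$. Hence they are at positive distance and have disjoint open neighbourhoods $U\supseteq K$ and $V\supseteq L$. Now $g_1$ is continuous on the closed set $K$ of $M$, so by Tietze's extension theorem it extends to a nonnegative $h_1\in C(M)$ (truncate by $\vee 0$ to keep positivity). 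Likewise $g_2|_L$ extends to $h_2\ge0$. Then I take an Urysohn function $\varphi(x)=d(x,L)/(d(x,K)+d(x,L))$, which equals $1$ on $K$ and $0$ on $L$, and set
\[
f=\varphi h_1+(1-\varphi)h_2\in C(M)_+.
\]
This $f$ agrees exactly with $g_1$ on $K$ and with $g_2$ on $L$, so $Jf$ lies in the basic neighbourhood of $g$. This yields density, and by the checklist $C_k(P\sqcup Q)$ with $J$ is the completion.

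The only delicate point I anticipate is noticing that compact subsets of $P$ and $Q$ are closed in $M$ and disjoint. That is what makes Tietze and Urysohn applicable, even though $P$ itself may be neither open nor closed.
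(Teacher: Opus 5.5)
Your proposal is correct and follows the paper's proof: the same checklist, completeness of $C_k(P\sqcup Q)$ from metrizability of $P\sqcup Q$ (which the paper cites from Narici--Beckenstein rather than proving), and density via Tietze, using that $K$ and $L$ are disjoint closed subsets of $M$. The only cosmetic difference is that the paper first pastes $g_1|_K$ and $g_2|_L$ into one continuous function on the closed set $K\cup L$ and applies Tietze once, taking $|h|$ for positivity. That makes your Urysohn gluing with $\varphi$ unnecessary, and it also avoids the degenerate case $K=\emptyset$ or $L=\emptyset$, where your formula for $\varphi$ is undefined and would need separate treatment.
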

\begin{proof}
    Since $M$ is metrizable, so is $P\sqcup Q$; hence
    $C_k(P\sqcup Q)$ is complete
    \cite[Theorem 5.8.7]{narici_beckenstein2011}. The map $J$ is an
    injective vector lattice homomorphism. Identifying $C(M)$ with
    $J(C(M))$ and equipping it with the subspace topology makes it a
    Hausdorff locally convex-solid vector lattice, and $J$ a
    topological embedding.

    It remains to show that $J(C(M)_+)$ is dense in
    $C_k(P\sqcup Q)_+$. Let $f=f_P\sqcup f_Q\in C_k(P\sqcup Q)_+$, and
    let $K=K_P\sqcup K_Q$ be a compact subset of $P\sqcup Q$, where
    $K_P\subseteq P$ and $K_Q\subseteq Q$ are compact. The sets $K_P$
    and $K_Q$ are disjoint compact subsets of the Hausdorff space $M$,
    hence they are closed in $M$. Therefore, the function
    $h'\colon K_P\cup K_Q\to\R$ defined by $h'|_{K_P}=f_P|_{K_P}$ and
    $h'|_{K_Q}=f_Q|_{K_Q}$ is positive and continuous. Since $M$ is
    normal, the Tietze extension theorem provides a continuous
    function $h\in C(M)$ that agrees with $h'$ on $K_P\cup K_Q$. Then
    $J(|h|)|_K=f|_K$, proving the density of $J(C(M)_+)$.
\end{proof}

With the notation of the preceding proposition, let $M=\R$,
$P=\R\setminus\Q$, and let $J\colon C(M)\to C_k(P\sqcup Q)$ be the map
defined above. We show that $\chi_P\in C_k(P\sqcup Q)$ is not the
limit of a decreasing sequence of upper elements.

\begin{lem}
    Let $f \in C_k(P\sqcup Q)$ be an upper element such that $\chi
    _P\le f$. Then $f(q)\ge 1$ for every $q \in Q$.
\end{lem}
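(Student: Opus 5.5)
Since $f$ is an upper element, we fix an increasing sequence $(g_n)\subseteq C(\R)_+$ with $Jg_n\to f$ in $C_k(P\sqcup Q)$. Convergence in the compact-open topology implies pointwise convergence on $P\sqcup Q$. Because $(g_n)$ is increasing, we get $f(x)=\sup_n g_n(x)$ for every $x\in\R$: for $x\in P$ this is $f|_P(x)$, and for $x\in Q$ it is $f|_Q(x)$. So $f$, read as a function on $\R$, is the pointwise supremum $g=\sup_n g_n$ of an increasing sequence of continuous functions on $\R$. Such a $g$ is lower semicontinuous as a function on $\R$, and this holds regardless of the splitting into $P$ and $Q$.

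Now fix $q\in Q$. The hypothesis $\chi_P\le f$ gives $g(x)\ge 1$ for every irrational $x$. Since $P$ is dense in $\R$, we can pick irrationals $x_k\to q$. Lower semicontinuity of $g$ at $q$ then yields
\[
g(q)\le\liminf_{k} g(x_k)
\]
in the wrong direction, so I would argue more carefully. What we actually want is an upper bound, $\liminf_{x\to q} g(x)\ge 1$, and that does not by itself give $g(q)\ge 1$, since lower semicontinuous functions can drop at a point. So the monotone-limit observation alone is not enough.

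The missing ingredient is that the convergence is uniform on compact subsets of $P\sqcup Q$, not merely pointwise. I would therefore use a Baire category argument on $P$ near $q$. For each $n$, set $F_n=\{x\in\R: g_n(x)\ge 1-\varepsilon\}$, which is closed in $\R$. Since $\sup_n g_n\ge 1$ on $P$, we have $P\subseteq\bigcup_n F_n$, and together with the countably many points of $Q$ this covers $\R$. Let $I$ be a closed interval around $q$. Baire applied to $I$, which is a countable union of the closed sets $F_n\cap I$ and the singletons $\{r\}$ for $r\in Q$, gives some $F_n$ containing an open subinterval of $I$. This only gives an interval somewhere near $q$, though, not one containing $q$. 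To fix this, I would apply Baire to every open subinterval $V\subseteq I$. The set $\bigcup_n \operatorname{int}(F_n)$ is then a dense open set $G$, and $g\ge 1-\varepsilon$ on $G$. Since $q$ may lie outside $G$, I would finish with a different step.

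The key step, and the main obstacle, is to use compact convergence on $Q$ itself. Pick a sequence $(x_k)\subseteq P$ with $x_k\to q$. The set $K=\{q\}\cup\{x_k\}$ is compact in $\R$, but $\{x_k\}\subseteq P$ is not compact in $P$. So I would instead take $K=\{x_k\}\cup\{q\}$ in $\R$ and exploit monotonicity. Given $\varepsilon>0$, choose for each $k$ an index $n_k$ with $g_{n_k}(x_k)>1-\varepsilon$. If $f(q)<1-2\varepsilon$, then $g_n(q)<1-2\varepsilon$ for all $n$, and by continuity each $g_n$ is $<1-2\varepsilon$ on a neighborhood $W_n$ of $q$. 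I would then build, by diagonalization, a sequence of irrationals $x_k\to q$ with $x_k\in W_{k}$ but $g_{m}(x_k)>1-\varepsilon$ for some $m$, and use the Baire-dense set $G$ to choose these $x_k$ inside fixed interiors. This gives $x_k\in \operatorname{int}F_{m_k}\cap W_k$ with $m_k\to\infty$. The compact set $L=\{x_k\}\cup\{\lim\}$ must be arranged so that its limit lies in $P$: I would choose the $x_k$ converging to an irrational point $p$ near $q$ inside a component of $G$, so that $L$ is compact in $P$. Uniform convergence of $g_n\to f$ on $L$ is then contradicted, because $g_{k}(x_k)<1-2\varepsilon$ while $f(x_k)\ge 1$. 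Making this diagonal choice work, with points where $g_k$ is still small but $f\ge1$, accumulating at an irrational, is the heart of the proof. I expect to realize it by placing $x_k$ near $q$ and the accumulation point $p$ irrational, using a shrinking family of intervals around $q$ in which $g_k<1-2\varepsilon$.
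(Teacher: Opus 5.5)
There is a genuine gap. Your argument only uses monotone pointwise convergence on $\R$ and uniform convergence on compact subsets of $P$, and these are not enough to prove the lemma. Fix $q\in Q$ and set $g_n(x)=\min(1,n|x-q|)$.
\begin{enumerate}
    \item The $g_n$ form an increasing sequence in $C(\R)_+$ with supremum $g=1-\chi_{\{q\}}\ge\chi_P$.
    \item They converge uniformly on every compact $K\subseteq P$. Such a $K$ is closed in $\R$ and misses $q$, so $g_n=1$ on $K$ once $n$ exceeds the reciprocal of the distance from $q$ to $K$.
    \item Yet $g(q)=0$.
\end{enumerate}
So the diagonal step you call the heart of the proof cannot be carried out. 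In this example $g_k(x)<1-2\varepsilon$ forces $|x-q|<1/k$. Hence the neighborhoods $W_k$ shrink to $q$, and any points $x_k\in W_k$ accumulate at the rational point $q$, never at an irrational $p$. Your two requirements on the $x_k$ (that $x_k\to q$ and that $x_k\to p\in P$) are incompatible, and no compact $L\subseteq P$ yields a contradiction. What excludes this example is the $Q$-half of the compact-open topology. Here $g|_Q$ is discontinuous at $q$; equivalently, $g_n$ does not converge uniformly on the compact set $\{q\}\cup\{q+1/m:m\in\N\}\subseteq Q$.

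Continuity of $f|_Q$ on $Q$ is the missing idea, and with it you were already finished once you had the dense open set $G$. The Baire argument is also unnecessary. Since $g=\sup_n g_n$ is lower semicontinuous, $g^{-1}(c,\infty)$ is open for every $c<1$. It contains the dense set $P$, so it is dense open in $\R$. A dense open subset of $\R$ meets every nonempty open interval in a nonempty open set, so its intersection with $\Q$ is dense in $\Q$. Because $f\in C_k(P\sqcup Q)$, the restriction $f|_Q=g|_Q$ is continuous on $Q$, and it is $>c$ on a dense subset of $Q$. Hence $f(q)\ge c$ for every $q\in Q$, and letting $c\uparrow 1$ gives $f(q)\ge 1$. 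This is precisely the paper's proof.
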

\begin{proof}
    Let $(f_n)\subseteq C(\R)_+$ be an increasing sequence converging
    to $f$ in $C_k(P\sqcup Q)$. For $t\in\R$, set
    $g(t)=\sup_n f_n(t)$. Then $g$ is lower semicontinuous on $\R$.
    Moreover, since convergence in $C_k(P\sqcup Q)$ is, in particular, pointwise, the
    restrictions of $g$ to $P$ and $Q$ coincide with those of $f$.

    Since $f\ge\chi_P$, we have $g(t)\ge1$ for all $t\in P$. Fix
    $c<1$. By lower semicontinuity, the set
    $g^{-1}(c,\infty)$ is open and contains $P$. We claim that
    $g^{-1}(c,\infty)\cap\Q$ is dense in $\Q$. Indeed, let $s\in \Q$ and $U$ an open interval containing $s$. Since every open
    interval contains an element of $P$, we have that
    $V=U\cap g^{-1}(c,\infty)\neq \emptyset$. Thus, $V$ is an open, nonempty
    subset of $\R$, so $U\cap g^{-1}(c,\infty)\cap\Q=V\cap \Q\neq\emptyset$. 
    
    Now, since
    $g|_\Q=f|_\Q$ is continuous and
    $g^{-1}(c,\infty)\cap\Q$ is dense in $\Q$, we obtain
    $f(q)=g(q)\ge c$ for every $q\in\Q$. As $c<1$ was arbitrary,
    $f(q)\ge1$ for every $q\in Q$.
\end{proof}

Fix $q_0\in Q$ and consider the open neighborhood of $0$ given by
\[
    V=\{\, f\in C_k(P\sqcup Q): |f(q_0)|<1/2\,\}.
\]
By the preceding lemma, every upper element $f\ge\chi_P$ satisfies
\[
    (f-\chi_P)(q_0)=f(q_0)\ge1.
\]
Thus $f\notin\chi_P+V$. Consequently, $\chi_P$ is not the limit of a
decreasing sequence of upper elements: every element of such a
sequence would be an upper element bounded below by $\chi_P$, whereas
no such element belongs to the neighborhood $\chi_P+V$ of $\chi_P$.
We have therefore proved the following.

\begin{thm}<\gh[theorem4_4]{https://github.com/davidmunozlahoz/Aliprantis/blob/526881fef80400db04f8f3c736dc64d3fdc00f6c/Aliprantis/Q3.lean\#L678}>There exists a Hausdorff locally convex-solid vector lattice whose
    topological completion contains a positive element that is not the
    limit of a decreasing sequence of upper elements.
\end{thm}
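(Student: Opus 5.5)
The theorem collects what has been established in this section, so the proof only has to assemble the pieces. We take $M=\R$, $P=\R\setminus\Q$ and $Q=\Q$. By \cref{prop:Cdisjunion}, $C(\R)$ carries a Hausdorff locally convex-solid topology induced by $J\colon f\mapsto f|_P\sqcup f|_Q$. The same proposition identifies the topological completion as $C_k(P\sqcup Q)$, with the lattice structure given by the closure of $J(C(\R)_+)$. That closure is the pointwise positive cone, so the completion order is the pointwise one. The candidate element is $\chi_P=1\sqcup 0$. It is continuous on $P\sqcup Q$ because $P$ and $Q$ are clopen in the disjoint union, and it is positive, so $\chi_P\in C_k(P\sqcup Q)_+$.

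Next we show that $\chi_P$ is not the limit of a decreasing sequence of upper elements. Suppose, for contradiction, that $(u_n)$ is a decreasing sequence of upper elements with $u_n\to\chi_P$. The first step is to check that $u_n\ge\chi_P$ for every $n$. For $m\ge n$ we have $u_n-u_m\in \hat X_+$, and letting $m\to\infty$ gives $u_n-\chi_P\in\hat X_+$, because the positive cone of a Hausdorff locally solid vector lattice is closed. The preceding lemma then yields $u_n(q_0)\ge 1$ for a fixed $q_0\in Q$. On the other hand, $u_n\to\chi_P$ in the compact-open topology, in particular on the compact set $\{q_0\}$, so $u_n(q_0)\to\chi_P(q_0)=0$. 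This is a contradiction, and it can be phrased equivalently through the neighborhood $\chi_P+V$ introduced just before the statement.

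The substantive step is the preceding lemma, which uses lower semicontinuity of the supremum of the approximating sequence together with the density of $P$ in $\R$. Taking it as given, the only point needing care is the inequality $u_n\ge\chi_P$ for each term of the sequence. That inequality follows from closedness of the cone, which is standard for Hausdorff locally solid topologies.
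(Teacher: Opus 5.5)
Your proposal is correct and follows the paper's argument. You use the same space $C(\R)$ with completion $C_k(P\sqcup Q)$, the same element $\chi_P$, the same preceding lemma, and the same contradiction at a fixed rational point $q_0$; your statement that $u_n(q_0)\to 0$ is just the paper's neighborhood $\chi_P+V$ in different words, and your explicit proof that each $u_n\ge\chi_P$ (by closedness of the positive cone) fills in a step the paper uses tacitly when it says every term of such a sequence is bounded below by $\chi_P$.
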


\section{Question 4}\label{sec:q4}

\subsection{Background}

To understand how a Hausdorff locally solid vector lattice sits in its
completion, it is useful to characterize when its image is regular.
The following result was first proved in the locally convex setting by
I.\ Kawai \cite[Theorem 4.1]{kawai1957} and later extended to the
general setting by C.\ D.\ Aliprantis
\cite[Theorem 2.3]{aliprantis1974}.

\begin{thm}[\theoremcite{Theorem 2.41}{aliprantis_burkinshaw2003}]\label{thm:kawai}
    Let $X$ be a Hausdorff locally solid vector lattice, and let $\hat{X}$
    be its topological completion with embedding $J\colon X\to
    \hat{X}$. Then the following are equivalent:
    \begin{enumerate}
        \item $J(X)$ is a regular vector sublattice of $\hat{X}$.
        \item Every topologically Cauchy net $(x_\alpha )\subseteq
            X_+$ that order converges to zero in $X$ also converges to
            zero in the topology.
    \end{enumerate}
\end{thm}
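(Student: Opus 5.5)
The plan is to use the standard characterization of regularity: $J(X)$ is regular in $\hat{X}$ if and only if, for every net $(x_\alpha)\subseteq X$, $x_\alpha\downarrow 0$ in $X$ implies $Jx_\alpha\downarrow 0$ in $\hat{X}$. Indeed, if $D\subseteq X$ has infimum $d_0$ in $X$, the finite infima of $D$, minus $d_0$, form a net decreasing to $0$, and $J$ is a lattice homomorphism. Two further facts will be used throughout: $\hat{X}_+$ is closed, and $J$ is a topological embedding, so a net $(x_\alpha)$ in $X$ converges (respectively, is Cauchy) if and only if $(Jx_\alpha)$ does in $\hat{X}$.

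\emph{(i)$\Rightarrow$(ii).} Let $(x_\alpha)\subseteq X_+$ be Cauchy with $x_\alpha\to 0$ in order in $X$.
\begin{enumerate}
    \item Since $\hat{X}$ is complete, $Jx_\alpha\to \hat{x}$ for some $\hat{x}\in\hat{X}$, and $\hat{x}\ge 0$ because $\hat{X}_+$ is closed.
    \item By order convergence there is a net $z_\beta\downarrow 0$ in $X$ such that, for each $\beta$, eventually $x_\alpha\le z_\beta$.
    \item Fix $\beta$. Then eventually $Jz_\beta-Jx_\alpha\in\hat{X}_+$, and closedness of the cone gives $\hat{x}\le Jz_\beta$.
    \item By regularity, $Jz_\beta\downarrow 0$ in $\hat{X}$. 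So $\hat{x}$ is a positive lower bound of a net decreasing to $0$, hence $\hat{x}=0$.
\end{enumerate}
Thus $Jx_\alpha\to 0$, and since $J$ is an embedding, $x_\alpha\to0$.

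\emph{(ii)$\Rightarrow$(i).} Let $x_\alpha\downarrow 0$ in $X$, and let $\hat{y}\in\hat{X}$ satisfy $\hat{y}\le Jx_\alpha$ for all $\alpha$. Replacing $\hat{y}$ by $\hat{y}^+$, which is still a lower bound since each $Jx_\alpha\ge0$, we may assume $\hat{y}\ge 0$. We must show $\hat{y}=0$.

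The key step is to build a Cauchy net in $X_+$ that order converges to $0$ but converges topologically to $\hat{y}$.
\begin{enumerate}
    \item Since $J(X_+)$ is dense in $\hat{X}_+$, for each solid neighborhood $V$ of zero in $\hat{X}$ pick $u_V\in X_+$ with $Ju_V-\hat{y}\in V$.
    \item Index by pairs $(\alpha,V)$, directed by the order on $\alpha$ and reverse inclusion on $V$, and set $w_{(\alpha,V)}=u_V\wedge x_\alpha\in X_+$.
    \item Since $\hat{y}=\hat{y}\wedge Jx_\alpha$, the inequality $|a\wedge c-b\wedge c|\le|a-b|$ gives
    \[
    |Jw_{(\alpha,V)}-\hat{y}|=|Ju_V\wedge Jx_\alpha-\hat{y}\wedge Jx_\alpha|\le|Ju_V-\hat{y}|\in V.
    \]
    As $V$ is solid, $Jw_{(\alpha,V)}-\hat{y}\in V$. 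Hence $Jw_{(\alpha,V)}\to\hat{y}$, so $(w_{(\alpha,V)})$ is Cauchy in $X$.
    \item We have $0\le w_{(\alpha,V)}\le x_\alpha$, and $x_\alpha\downarrow0$ in $X$, so $w_{(\alpha,V)}$ order converges to $0$ in $X$.
\end{enumerate}
By (ii), $w_{(\alpha,V)}\to 0$ topologically, so $\hat{y}=\lim Jw_{(\alpha,V)}=0$ by the Hausdorff property.

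The main obstacle is this construction: choosing the product index set so that the net is simultaneously order-dominated by $(x_\alpha)$ and topologically close to $\hat{y}$. Solidity of the neighborhoods and the lattice inequality above make both requirements compatible. The remaining steps are routine bookkeeping with the closed cone and the embedding $J$.
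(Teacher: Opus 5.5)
Your proof is correct and complete: both directions check out, including the reduction of regularity to ``$x_\alpha\downarrow 0$ in $X$ implies $Jx_\alpha\downarrow 0$ in $\hat{X}$'', and the order convergence of $(u_V\wedge x_\alpha)$ under either common definition of order convergence. The paper gives no proof of its own; it quotes the result from \cite[Theorem 2.41]{aliprantis_burkinshaw2003}, and your argument (closed cone plus regularity for (i)$\Rightarrow$(ii), and the doubly indexed net $u_V\wedge x_\alpha$ approximating a positive lower bound for (ii)$\Rightarrow$(i)) is essentially the standard proof of that result.
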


A natural weakening of condition~(ii) is the requirement that every
topologically Cauchy net $(x_\alpha )\subseteq X_+$ satisfying
$x_\alpha\downarrow0$ also satisfies $x_\alpha\to0$. C.\ D.\
Aliprantis called this the \emph{generalized (A, 0) property}; its
sequential analogue is the \emph{(A, 0) property}. He asked whether
the generalized (A, 0) property is equivalent to condition~(ii).

\begin{question}[\theoremcite{Open Problem 3}{aliprantis1974}]
    Let $X$ be a Hausdorff locally solid vector lattice satisfying the
    generalized (A, 0) property. Is its image in the topological
    completion a regular sublattice?
\end{question}

\subsection{Solution}

We construct a Hausdorff locally convex-solid vector lattice
satisfying the generalized (A, 0) property whose image in its
topological completion is not a regular sublattice.

Let $M=\{0,1\}^{\mathbb N}$ with its product topology, and let
$$
C=\{x\in M:x_{2n}=0\text{ for every }n\in\mathbb N\}.
$$
Let 
$$
P=\{x\in C:\exists N, x_n=0 \text{ for }n\geq N\}
$$ and set $Q=M\setminus P$.
Then $C$ is closed and nowhere dense in $M$, and both $P$ and $C\setminus P$ are
dense in $C$. Let $J\colon C(M)\to C_k(P\sqcup Q)$ be
the map from \cref{prop:Cdisjunion}. Equip $C(M)$ with the subspace
topology induced by $J$. Then it is a Hausdorff locally convex-solid
vector lattice whose topological completion is $C_k(P\sqcup Q)$,
together with the embedding $J$.

\begin{lem}
    The vector sublattice $J(C(M))$ is not regular in
    $C_k(P\sqcup Q)$.
\end{lem}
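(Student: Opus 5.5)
Regularity of a sublattice $Z\subseteq W$ means that every subset of $Z$ with infimum $0$ in $Z$ also has infimum $0$ in $W$. So we plan to exhibit a downward directed family $D\subseteq C(M)_+$ with $\inf D=0$ in $C(M)$, but with a nonzero positive lower bound for $J(D)$ in $C_k(P\sqcup Q)$. The natural candidate is $\chi_P\sqcup 0$, the function equal to $1$ on $P$ and $0$ on $Q$, which is continuous on $P\sqcup Q$. We therefore take
\[
D=\{\, f\in C(M): 0\le f\le 1,\ f\equiv 1 \text{ on } C\,\},
\]
which is closed under finite minima, hence downward directed. By construction every $J(f)$ with $f\in D$ dominates $\chi_P\sqcup 0$. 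Since $P\neq\emptyset$, this lower bound is nonzero, so $\inf J(D)\neq 0$ in $C_k(P\sqcup Q)$.

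It remains to show $\inf D=0$ in $C(M)$, that is, that any $g\in C(M)$ with $g\le f$ for all $f\in D$ satisfies $g\le 0$.
\begin{enumerate}
\item Off $C$: for $x\in M\setminus C$, the set $C$ is closed and $M$ is compact metrizable, so Urysohn's lemma gives $f\in D$ with $f(x)=0$ (using $\{x\}$ and $C$ disjoint closed sets). Hence $g(x)\le 0$ for all $x\in M\setminus C$.
\item On $C$: since $C$ is nowhere dense, $M\setminus C$ is dense in $M$. Continuity of $g$ then gives $g\le 0$ on all of $M$.
\end{enumerate}
Thus $0$ is the greatest lower bound of $D$ in $C(M)$ (note $0\le f$ for $f\in D$), while $J(D)$ has the strictly positive lower bound $\chi_P\sqcup0$ in the completion, so $J(C(M))$ is not regular.

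The only points needing care are the following. First, regularity should be checked in the form ``infima of subsets are preserved''; if the paper's definition uses nets $x_\alpha\downarrow0$, we index $D$ by itself under reverse order. Second, we must confirm that $\chi_P\sqcup0$ genuinely lies in $C_k(P\sqcup Q)$, which holds because continuity on a disjoint union is checked componentwise. Neither step seems hard. The main subtlety is merely the density of $M\setminus C$, which follows from $C$ omitting all points with some even coordinate equal to $1$: every basic open cylinder can be extended to set a later even coordinate to $1$.
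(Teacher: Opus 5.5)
Your proof is correct and is essentially the paper's argument. The paper takes $D=\{f\in C(M)_+ : f\ge\chi_C\}$ under reverse order, proves $\inf D=0$ by applying Urysohn's lemma at points of the dense set $M\setminus C$ and then using continuity, and uses $\chi_P$ as a nonzero lower bound for $J(D)$. Your restriction to functions with $0\le f\le 1$ and $f\equiv 1$ on $C$, and your handling of arbitrary rather than positive lower bounds, are only cosmetic differences.
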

\begin{proof}
    Consider the set
    \[
        D=\{\, f \in C(M)_+ : f\ge \chi _{C} \, \}.
    \]
    Endow $D$ with the reverse pointwise order. Then the net
    $(f)_{f\in D}$ is decreasing in $C(M)$. We show that
    $\inf D=0$. Clearly, $0$ is a lower bound for $D$. Let
    $g\in C(M)_+$ be any lower bound for $D$, and let
    $t\in M\setminus C$. By Urysohn's lemma, there exists
    $h\in C(M)_+$ satisfying $h\ge\chi_C$ and $h(t)=0$. Since
    $g\le h$, it follows that $g(t)=0$. Thus
    $g=0$ on the dense subset $M\setminus C$, and continuity
    yields $g=0$ on $M$. Hence $\inf D=0$. On the other hand,
    $\chi_P$ is a nonzero lower bound for $J(D)$. Therefore,
    $J(C(M))$ is not regular in $C_k(P\sqcup Q)$.
\end{proof}

\begin{lem}
    $C(M)$, with the topology introduced above, has the generalized
    (A, 0) property.
\end{lem}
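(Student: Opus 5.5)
The plan is to show directly that the topological limit of such a net must be $0$. Let $(f_\alpha)\subseteq C(M)_+$ be a topologically Cauchy net with $f_\alpha\downarrow 0$ in $C(M)$. By \cref{prop:Cdisjunion}, $J(f_\alpha)$ converges in the complete space $C_k(P\sqcup Q)$ to some $F=F_P\sqcup F_Q$. Since convergence there implies pointwise convergence and the net is decreasing, $F$ agrees on $P$ and on $Q$ with the pointwise infimum
\[
g(t)=\inf_\alpha f_\alpha(t)\qquad(t\in M).
\]
Hence $g\ge 0$, $g$ is upper semicontinuous on $M$, $g|_P=F_P$ is continuous on $P$, and $g|_Q=F_Q$ is continuous on $Q$. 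It therefore suffices to prove $g\equiv 0$ on $M$, because then $F=0$ and $J(f_\alpha)\to 0$, which is exactly the generalized (A, 0) property for the topology induced by $J$.

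\emph{Step 1: $g$ vanishes on $Q$.} Suppose $g(q)>0$ for some $q\in Q$, and set $\delta=g(q)/2$. Continuity of $F_Q$ on $Q$ gives an open set $V\ni q$ in $M$ with $g\ge\delta$ on $V\cap Q$. Since $P$ is countable and $M$ has no isolated points, $Q$ is dense in $M$, so $V\cap Q$ is dense in $V$. The set $\{g\ge\delta\}$ is closed by upper semicontinuity, so $g\ge\delta$ on all of $V$. Choose a nonzero $h\in C(M)_+$ supported in $V$ with $h\le\delta$. Then $h\le g\le f_\alpha$ for every $\alpha$, so $h$ is a nonzero lower bound of $(f_\alpha)$ in $C(M)$. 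This contradicts $f_\alpha\downarrow 0$.

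\emph{Step 2: $g$ vanishes on $P$.} This is the step I expect to be the main obstacle, because $C$ is nowhere dense and so no bump-function argument inside $C$ is available. I would argue by contradiction, using the continuity of each $f_\alpha$ instead of Step 1's bump. Suppose $g(p)>0$ for some $p\in P$, and put $\delta=g(p)/2$. Continuity of $F_P$ on $P$ gives an open $W\ni p$ in $M$ with $g\ge\delta$ on $W\cap P$. Now $W\setminus P$ is nonempty (for instance $W$ meets $M\setminus C\subseteq Q$), so pick $c\in W\cap Q$. By Step 1, $g(c)=0$, so some index $\beta$ satisfies $f_\beta(c)<\delta/2$. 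By continuity of $f_\beta$ on $M$, there is an open $W'\ni c$ with $W'\subseteq W$ and $f_\beta<\delta/2$ on $W'$.

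The delicate point is to choose $c$ so that $W'$ still meets $P$. For this I take $c\in W\cap(C\setminus P)$, which is possible because $C\setminus P$ is dense in $C$ and $p\in C$, so $W\cap C$ is a nonempty relatively open subset of $C$. Since $P$ is dense in $C$ and $c\in C$, the neighbourhood $W'$ of $c$ contains some $p'\in P$. At that point $f_\beta(p')<\delta/2$, whereas $f_\beta(p')\ge g(p')\ge\delta$, a contradiction. Hence $g=0$ on $P$.

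Combining Steps 1 and 2 gives $g\equiv 0$, hence $F=0$ and $J(f_\alpha)\to 0$, as required.
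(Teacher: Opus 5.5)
Your proof is correct and follows the paper's argument. Step 2 is the same contradiction as the paper's, only phrased contrapositively: it uses the density of $P$ and of $C\setminus P$ in $C$, the continuity of a single $f_\beta$, and Step 1. In Step 1 the paper avoids upper semicontinuity by shrinking the neighbourhood directly to a nonempty open $V\subseteq U\setminus C\subseteq Q$, using that $C$ is nowhere dense; that same observation also justifies the density of $Q$ in $M$ most cleanly, whereas your inference from $P$ countable and $M$ without isolated points tacitly needs that nonempty open subsets of $M$ are uncountable (true here, but not automatic).
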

\begin{proof}
    Let $(f_\alpha)\subseteq C(M)_+$ be a topologically Cauchy net
    such that $f_\alpha\downarrow0$. Since $C_k(P\sqcup Q)$ is
    complete and $(J(f_\alpha))$ is also topologically Cauchy, there
    exists $g\in C_k(P\sqcup Q)$ such that $J(f_\alpha)\to g$. It
    suffices to show that $g=0$.

    First we show that $g|_Q=0$. Suppose, on the contrary, that
    $g(q_0)>0$ for some $q_0\in Q$. Then there exist
    $\varepsilon>0$ and an open neighborhood $U$ of $q_0$ in $M$
    such that $g(q)>\varepsilon$ for every $q\in U\cap Q$. Since
    $J(f_\alpha)\to g$ and $(f_\alpha)$ is decreasing, we have
    $f_\alpha(q)\ge g(q)>\varepsilon$ for every $q\in U\cap Q$ and
    every $\alpha$. As $M\setminus C$ is dense and open, there is
    a nonempty open set $V\subseteq U\setminus C$. By Urysohn's lemma,
    there exists a nonzero function $h\in C(M)_+$ such that
    $0\le h\le\varepsilon$ and $h$ vanishes outside $V$. Then
    $h\le f_\alpha$ for every $\alpha$, contradicting
    $f_\alpha\downarrow0$.

    To finish the proof, we show that $g|_P=0$. Suppose, on the
    contrary, that $g(p_0)>0$ for some $p_0\in P$. Then there exist
    $\varepsilon>0$ and an open neighborhood $U\subseteq M$ of
    $p_0$ such that $g(p)>\varepsilon$ for all $p\in U\cap P$. Hence
    $f_\alpha(p)>\varepsilon$ for every $p\in U\cap P$ and every
    $\alpha$. Fix $\alpha$. Since $f_\alpha$ is continuous on
    $M$ and $P$ is dense in $C$, we have
    $f_\alpha(t)\ge\varepsilon$ for all $t\in U\cap C$. As
    $C\setminus P$ is dense in $C$, there exists
    $s\in U\cap(C\setminus P)$. Then $s\in Q$ and
    $f_\alpha(s)\ge\varepsilon$ for every $\alpha$, which implies
    $g(s)=\inf_\alpha f_\alpha(s)\ge\varepsilon$. This contradicts the
    fact that $g|_Q=0$.
\end{proof}

In summary, we have proved the following.

\begin{thm}<\gh[theorem5_4]{https://github.com/davidmunozlahoz/Aliprantis/blob/526881fef80400db04f8f3c736dc64d3fdc00f6c/Aliprantis/Q4.lean\#L695}>There exists a Hausdorff locally convex-solid vector lattice
    satisfying the generalized (A, 0) property such that the image of
    this vector lattice in its topological completion is not regular.
\end{thm}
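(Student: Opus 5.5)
This theorem follows by assembling the two preceding lemmas with \cref{prop:Cdisjunion}. Take $M=\{0,1\}^{\N}$, the closed nowhere dense set $C$ of sequences vanishing at even coordinates, the countable set $P\subseteq C$ of finitely supported elements of $C$, and $Q=M\setminus P$. Since $M$ is a compact metric space, \cref{prop:Cdisjunion} applies. It shows that $C(M)$, with the topology induced by $J\colon f\mapsto f|_P\sqcup f|_Q$, is a Hausdorff locally convex-solid vector lattice. It also shows that its topological completion is $C_k(P\sqcup Q)$ together with the embedding $J$.

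Next I check the topological facts the two lemmas use.
\begin{itemize}
\item $C$ is closed, because it is an intersection of the clopen conditions $x_{2n}=0$.
\item $C$ is nowhere dense, since every basic cylinder fixes only finitely many coordinates and so meets $M\setminus C$ at a free even coordinate.
\item $P$ is dense in $C$, because truncating an element of $C$ gives finitely supported points of $C$ converging to it.
\item $C\setminus P$ is dense in $C$, because one can modify the tail of any element of $C$ at infinitely many odd coordinates.
\end{itemize}
Urysohn's lemma is available because $M$ is metrizable, hence normal.

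With these facts in place, the lemma asserting that $J(C(M))$ is not regular in $C_k(P\sqcup Q)$ supplies the failure of regularity. The decreasing net $D=\{f\ge\chi_C\}$ has infimum $0$ in $C(M)$, yet $\chi_P$ is continuous on $P\sqcup Q$ and is a nonzero lower bound of $J(D)$ in the completion. The subsequent lemma supplies the generalized (A, 0) property.

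The only delicate point is that second lemma, namely showing that the limit $g$ of $J(f_\alpha)$ vanishes on both pieces. On $Q$, I would use that $M\setminus C$ is open and dense, so that a bump function lies below every $f_\alpha$. On $P$, I would use that lower bounds for the $f_\alpha$ propagate by continuity from $P$ to $C$. Since $C\setminus P$ is dense in $C$ and lies in $Q$, this reduces to the $Q$ case. Both lemmas are already established above, so combining them completes the proof.
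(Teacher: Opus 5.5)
Your proposal is correct and follows the paper's argument: the same space $M=\{0,1\}^{\N}$ with $C$, $P$ and $Q=M\setminus P$, the completion identified via \cref{prop:Cdisjunion}, non-regularity witnessed by the net $D$ and the lower bound $\chi_P$, and the generalized (A, 0) property obtained by showing the limit $g$ vanishes first on $Q$ (bump function in the open dense set $M\setminus C$) and then on $P$ (propagation to $C$ plus density of $C\setminus P$). Your explicit checks that $C$ is closed and nowhere dense and that $P$ and $C\setminus P$ are dense in $C$ fill in facts the paper only asserts, and those checks are correct.
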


\section{Question 5}\label{sec:q5}

\subsection{Background}

Recall that a locally solid
vector lattice $X$ has property (A, 0) if every Cauchy sequence
$(x_n)\subseteq X_+$ satisfying $x_n\downarrow0$ also satisfies
$x_n\to0$. C.\ D.\ Aliprantis used the (A, 0) property to characterize
regularity and order density for metrizable locally solid vector
lattices in their topological completions.

\begin{thm}[\theoremcite{Theorem 6.1}{aliprantis1974}]\label{thm:regdense}
    Let $X$ be a metrizable locally solid vector lattice, let
    $\hat{X}$ be its topological completion, and let $J\colon X\to
    \hat{X}$ be the canonical embedding. Then the following are
    equivalent:
    \begin{enumerate}
        \item $X$ satisfies property (A, 0).
        \item $J(X)$ is order dense in $\hat{X}$.
        \item $J(X)$ is regular in $\hat{X}$, i.e., $J$ preserves
            arbitrary suprema.
        \item $J$ preserves countable suprema.
        \item For every $\hat{x}\in\hat{X}_+$,
            \[
                \hat{x}=\sup\{\,Jx:x\in X,\ 0\le Jx\le\hat{x}\,\}.
            \]
    \end{enumerate}
\end{thm}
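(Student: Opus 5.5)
The plan is to prove the cycle (i)$\Rightarrow$(ii)$\Rightarrow$(iii)$\Rightarrow$(iv)$\Rightarrow$(i), and to attach (v) through (ii)$\Rightarrow$(v)$\Rightarrow$(iii). Two of these steps are formal. (ii)$\Rightarrow$(iii) holds because an order dense sublattice is always regular. (iii)$\Rightarrow$(iv) is trivial. For (ii)$\Rightarrow$(v): if $\hat{x}$ were not the supremum of the set, there would be an upper bound $\hat{y}<\hat{x}$ of it. Order density then gives $0<Jz\le \hat{x}-\hat{y}$. Since $Jz\le\hat{x}$, the element $Jz$ lies in the set, and the same holds for sums of such elements below $\hat{x}$. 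Iterating gives $\hat{y}+Jz$ below $\hat{x}$ and in the span, which contradicts $\hat{y}$ being an upper bound. (v)$\Rightarrow$(iii): if $Jx_\alpha\downarrow$ with a lower bound $\hat{z}>0$, we use (v) to find $0<Jx\le\hat{z}$, so $x$ is a positive lower bound for the net in $X$.

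For (iv)$\Rightarrow$(i), let $(x_n)$ be Cauchy with $x_n\downarrow 0$. Completeness gives $Jx_n\to\hat{x}$. Since $\hat{X}_+$ is closed, we get $\hat{x}=\inf_n Jx_n$ in $\hat{X}$. By (iv) applied to the decreasing sequence, this infimum equals $J(\inf x_n)=0$. Hence $x_n\to 0$.

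The main work is (i)$\Rightarrow$(ii); this is where metrizability enters. Take $\hat{x}>0$ in $\hat{X}$. Using a countable base $(U_k)$ of solid neighborhoods with $U_{k+1}+U_{k+1}\subseteq U_k$, choose $y_k\in X_+$ with $Jy_k-\hat{x}\in U_{k}$, and set $x_n=y_n\wedge y_{n+1}\wedge\cdots\wedge y_{n+m}$. This follows Luxemburg's construction behind \cref{thm:lux}: the double limit yields decreasing sequences of upper elements. From it we build a decreasing Cauchy sequence $(z_n)\subseteq X_+$ with $Jz_n\ge$ something converging to a positive element. The steps are:
\begin{enumerate}
\item Write $\hat{x}$ as the limit of a decreasing sequence of upper elements $\hat{u}_k\ge\hat{x}$, as in \cref{thm:lux}.
\item Argue by contradiction. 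If no $x\in X$ satisfies $0<Jx\le\hat{x}$, produce a decreasing Cauchy sequence in $X_+$ whose order infimum in $X$ is $0$ but whose topological limit is at least $\hat{x}$ or $\hat{x}^+$.
\item Conclude that this contradicts (A, 0).
\end{enumerate}
The key point in step (ii) is that any lower bound $w\in X_+$ of the sequence satisfies $Jw\le$ the limit. We want to force $Jw\le\hat{x}$, hence $w=0$ by assumption. Getting this inequality requires the sequence to be chosen from elements dominated by the approximants and dominating $\hat{x}$ only up to the error neighborhoods $U_k$.

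I expect the main obstacle to be making step (ii) precise: we need a single decreasing sequence that is simultaneously Cauchy, has infimum $0$ in $X$ (which requires exploiting the absence of elements of $J(X)$ under $\hat{x}$), and does not converge to $0$. I would first try the elements $z_n=\bigwedge_{k\le n}y_k$, with the $y_k$ chosen so that $\sum$ of the errors converges. Here $y_k\ge$ is replaced by $\hat{x}$ minus a small positive error via $(Jy_k-\hat{x})^-\in U_k$. Then I would check the Cauchy property using the solidity estimate $|a\wedge b-a\wedge c|\le|b-c|$.
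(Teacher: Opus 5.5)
The paper only quotes this theorem from Aliprantis without proof, so I am judging your argument on its own. Most of the formal implications are fine: (ii)$\Rightarrow$(iii), (iii)$\Rightarrow$(iv), (iv)$\Rightarrow$(i), and (v)$\Rightarrow$(ii), which is what your (v)$\Rightarrow$(iii) really uses. In (ii)$\Rightarrow$(v), however, the contradiction is misplaced, because $\hat y+Jz$ need not lie in $J(X)$. Instead, replace $\hat y$ by $\hat y\wedge\hat x$ and take $0<Jz\le\hat x-\hat y$. Then $Jz\le\hat y$ gives $2Jz\le\hat x$, hence $2Jz\le\hat y$, and inductively $nJz\le\hat x$ for all $n$. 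It is the Archimedean property of $\hat X$ that rules this out.

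The genuine gap is (i)$\Rightarrow$(ii). You leave it as a plan, and the plan partly points the wrong way. A lower bound $w\in X_+$ of a decreasing sequence converging to $\hat z$ only satisfies $Jw\le\hat z$. So a limit that is ``at least $\hat x$'' cannot force $Jw\le\hat x$, and neither can the upper elements $\hat u_k\ge\hat x$ of \cref{thm:lux}. What you need is a decreasing Cauchy sequence in $X_+$ whose limit satisfies both $0\ne\hat z$ and $\hat z\le\hat x$. Your last paragraph has the right ingredients, but you never verify these two properties, and they are the whole argument.

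Here is how to close it. Take $Jy_k-\hat x\in U_k$ and set $z_n=y_N\wedge\cdots\wedge y_n$ for $n\ge N$.
\begin{enumerate}
\item $(z_n)$ is Cauchy, since $0\le z_n-z_{n+1}\le(y_n-y_{n+1})^+$.
\item $\hat z\le\hat x$: we have $Jz_n\le Jy_k$ for $N\le k\le n$, and $\hat X_+$ is closed, so the limit $\hat z$ satisfies $\hat z\le Jy_k$ for every $k\ge N$, hence $\hat z\le\hat x$.
\item $\hat z\ne0$ for large $N$: from $\hat x-\hat x\wedge Jz_n\le\sum_{k=N}^{n}(Jy_k-\hat x)^-$ and solidity, $\hat x-\hat z\in\overline{U_{N-1}}$. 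Since $\hat X$ is Hausdorff, this excludes $\hat z=0$ once $N$ is large.
\end{enumerate}
Now (A, 0) says $(z_n)$ does not decrease to $0$ in $X$. So some lower bound $w\in X$ has $w^+>0$, and $w^+$ is still a lower bound. Hence $0<Jw^+\le\hat z\le\hat x$.

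This needs neither a contradiction hypothesis nor upper elements. Metrizability enters only through the sequence $(y_k)$ and the countable base $(U_k)$ with $U_{k+1}+U_{k+1}\subseteq U_k$.
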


It is remarkable that order density of $J(X)$ is equivalent to
regularity. Aliprantis asked whether this remains true in the
non-metrizable case.

\begin{question}[\theoremcite{Open Problem 4}{aliprantis1974}]
    Let $X$ be a Hausdorff locally solid vector lattice, let
    $\hat{X}$ be its topological completion, and let $J\colon X\to
    \hat{X}$ be the canonical embedding. Is $J(X)$ regular in
    $\hat{X}$ if and only if it is order dense in $\hat{X}$?
\end{question}

\subsection{Solution}

We construct a Hausdorff locally solid vector lattice whose image in
its completion is regular but not order dense. Since every order dense
sublattice is regular, we are going to show that the nontrivial
implication fails.

We reuse the counterexample from \cref{sec:q3}, including its
notation ($P=\R\setminus\Q$, $Q=\Q$). In fact, the following was already proved in
\cite[Example 9.8]{bilokopytov2023}, thereby solving the open question
without recognizing it as such. For completeness, we provide a
detailed proof.

\begin{prop}[\theoremcite{Example 9.8}{bilokopytov2023}]
    $J(C(\R))$ is regular in $C_k(P\sqcup Q)$, but not order dense.
\end{prop}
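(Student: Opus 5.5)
Two things must be shown: $J(C(\R))$ is not order dense, and it is regular.

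\emph{Not order dense.} I would use the element $\chi_P\in C_k(P\sqcup Q)_+$, which is nonzero. Suppose $0<J(h)\le \chi_P$ for some $h\in C(\R)$. Then $h=0$ on the dense set $Q=\Q$, so by continuity $h=0$ on $\R$, a contradiction. Hence no nonzero element of $J(C(\R))$ lies below $\chi_P$, and $J(C(\R))$ is not order dense.

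\emph{Regular.} I would verify directly that $J$ preserves infima. Let $D\subseteq C(\R)_+$ be a downward directed set with $\inf D=0$ in $C(\R)$, and let $g\in C_k(P\sqcup Q)$ satisfy $0\le g\le J(f)$ for all $f\in D$. The goal is $g=0$.

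\begin{enumerate}
\item \emph{The key lemma.} Set $u=\inf_{f\in D} f$ pointwise on $\R$. This $u$ is upper semicontinuous. The standard fact for $C(\R)$ is that $\inf D=0$ in $C(\R)$ holds exactly when the set $\{u>0\}$ has empty interior. Indeed, if $u\ge\varepsilon$ on an open interval, a bump of height $\varepsilon$ supported there is a nonzero lower bound for $D$. Since $u$ is upper semicontinuous, each set $\{u\ge\varepsilon\}$ is closed. Having empty interior, it is nowhere dense. Hence $\{u>0\}=\bigcup_n\{u\ge 1/n\}$ is meagre, and its complement $\{u=0\}$ is comeagre in $\R$.

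\item \emph{$g=0$ on $P$.} Suppose $g(p_0)>\varepsilon>0$ for some $p_0\in P$. By continuity of $g|_P$ there is an open interval $U\ni p_0$ with $g>\varepsilon$ on $U\cap P$, so $u\ge g>\varepsilon$ on $U\cap P$. Because $u$ is upper semicontinuous, $\{u\ge\varepsilon\}$ is closed, and $P$ is dense in $U$. Therefore $u\ge\varepsilon$ on all of $U$, contradicting the key lemma.

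\item \emph{$g=0$ on $Q$.} Suppose $g(q_0)>\varepsilon$ for some $q_0\in Q$, and take an interval $U$ with $g>\varepsilon$ on $U\cap\Q$. Then $u>\varepsilon$ on $U\cap\Q$. This dense set is only countable and meagre, so upper semicontinuity alone does not force $u\ge\varepsilon$ on all of $U$. Instead I would use that $g|_Q\le f|_Q$ for every $f\in D$ and that each such $f$ is continuous on $\R$. Then $f>\varepsilon$ on $U\cap\Q$ gives $f\ge\varepsilon$ on all of $U$ by continuity. Hence $u\ge\varepsilon$ on $U$, again contradicting the key lemma.
\end{enumerate}

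The two cases rest on the same mechanism: every lower bound dominated by $J(D)$ forces $\inf D\ge\varepsilon$ on an open set. The only genuine content is the key lemma, and I expect its Baire-type argument to be the main point to write carefully. The rest is routine bookkeeping with the topology of $P\sqcup Q$: continuity of $g|_P$ and $g|_Q$ gives the open sets in steps 2 and 3, and the claim $g\le J(f)$ is checked pointwise.
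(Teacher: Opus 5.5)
Your proposal is correct and follows essentially the paper's route: the failure of order density via $\chi_P$ is identical, and for regularity the paper likewise uses continuity together with the density of $P$ and $Q$ to find an interval on which every $f$ satisfies $f\ge\varepsilon$, after which a bump function below $D$ contradicts $\inf D=0$. Two minor remarks: the Baire/meagre part of your key lemma is never used, since steps 2 and 3 are contradicted directly by the bump-function observation; and your claim that upper semicontinuity cannot handle the $Q$ case is mistaken---$\{u\ge\varepsilon\}$ is closed and contains $U\cap\Q$, which is dense in $U$, so countability is irrelevant and both cases go through identically, as in the paper.
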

\begin{proof}
    We first prove that $J(C(\R))$ is regular. Let
    $F\subseteq C(\R)_+$ be such that $J(F)$ has infimum zero in
    $J(C(\R))$. We show that $J(F)$ has infimum zero in
    $C_k(P\sqcup Q)$. Suppose, on the contrary, that there exists
    $h\in C_k(P\sqcup Q)_+$ such that
    \[
        0<h\le J(f)\qquad(f\in F).
    \]
    Then $h$ is strictly positive on a nonempty open subset of either
    $P$ or $Q$. Since both $P$ and $Q$ are dense in $\R$, there are a
    nonempty open interval $V\subseteq\R$ and $\varepsilon>0$ such
    that every $f\in F$ satisfies $f\ge\varepsilon$ on $V$. Choose a
    nonzero function $\varphi\in C(\R)_+$ such that
    $\supp\varphi\subseteq V$ and $0<\varphi\le\varepsilon$. Then
    \[
        0<J(\varphi)\le J(f)\qquad(f\in F),
    \]
    contradicting the fact that $J(F)$ has infimum zero in
    $J(C(\R))$. Hence $J(C(\R))$ is regular in $C_k(P\sqcup Q)$.

    However, it is not order dense: there is no $f\in C(\R)_+$ such that
    $0<J(f)\le\chi_P$, since any such $f$ would vanish on the rational
    numbers and therefore, by continuity, be identically zero.
\end{proof}

We have thus proved the following.

\begin{thm}<\gh[theorem 6_3]{https://github.com/davidmunozlahoz/Aliprantis/blob/526881fef80400db04f8f3c736dc64d3fdc00f6c/Aliprantis/Q5.lean\#L412}>There exists a Hausdorff locally convex-solid vector lattice whose
    image in its topological completion is regular but not order
    dense.
\end{thm}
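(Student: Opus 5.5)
The theorem follows by taking $X=C(\R)$ with the topology induced by $J\colon C(\R)\to C_k(P\sqcup Q)$, where $P=\R\setminus\Q$ and $Q=\Q$, exactly as in \cref{sec:q3}. There are three things to establish: that $X$ is a Hausdorff locally convex-solid vector lattice with completion $C_k(P\sqcup Q)$, that $J(X)$ is regular in this completion, and that $J(X)$ is not order dense in it.

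First, I would invoke \cref{prop:Cdisjunion} with $M=\R$. It immediately gives that $C(\R)$, with the subspace topology induced by $J$, is Hausdorff and locally convex-solid, and that its completion is $C_k(P\sqcup Q)$ together with $J$.

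Second, for regularity, I would show that $J$ preserves infima of sets with infimum zero. Take $F\subseteq C(\R)_+$ with $\inf F=0$ in $C(\R)$, and suppose some $h\in C_k(P\sqcup Q)$ satisfies $0<h\le J(f)$ for all $f\in F$. Since $h$ is continuous and nonzero on $P$ or on $Q$, it is bounded below by some $\varepsilon>0$ on $W\cap P$ or on $W\cap Q$, for some open interval $W$. Both $P$ and $Q$ are dense in $\R$, and each $f$ is continuous on $\R$, so $f\ge\varepsilon$ on all of $W$. A bump function $\varphi\in C(\R)_+$ supported in $W$ with $0<\varphi\le\varepsilon$ is then a nonzero lower bound of $F$ in $C(\R)$. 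This contradicts $\inf F=0$, so $J(X)$ is regular.

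Third, to show $J(X)$ is not order dense, I would use $\chi_P$, which is continuous on $P\sqcup Q$ and positive. Any $f\in C(\R)_+$ with $J(f)\le\chi_P$ vanishes on $\Q$, hence vanishes identically by continuity. So no element of $J(X)$ lies strictly between $0$ and $\chi_P$.

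The only mildly delicate step is the second one: passing from a lower bound $h$ in the completion to a uniform lower bound on an interval of $\R$. This relies on the density of both $P$ and $Q$ in $\R$ together with the continuity of each $f$. Since the paper already gives this argument in the proof of the preceding proposition, the theorem follows from that proposition once the completion is identified via \cref{prop:Cdisjunion}.
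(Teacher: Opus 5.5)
Your proposal is correct and follows the paper's proof essentially step for step. It uses the same space $C(\R)$, embedded in $C_k(P\sqcup Q)$ via \cref{prop:Cdisjunion}, the same bump-function argument for regularity (which relies on the density of both $P$ and $Q$ in $\R$), and the same witness $\chi_P$ for the failure of order density.
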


\section*{AI disclosure}

The counterexamples in this paper, except for the one in
\cref{sec:q5}, were generated by GPT-5.5, and formalized using
GPT-5.6 Sol. This paper was entirely written by the authors. GPT-5.6 Luna
and Sol were used, with human supervision, to correct typos and other
minor errors in the text.

\emergencystretch=1em
\printbibliography

@book {aliprantis_burkinshaw2003,
    AUTHOR = {Aliprantis, Charalambos D. and Burkinshaw, Owen},
     TITLE = {Locally solid {R}iesz spaces with applications to economics},
    SERIES = {Mathematical Surveys and Monographs},
    VOLUME = {105},
   EDITION = {Second},
 PUBLISHER = {American Mathematical Society, Providence, RI},
      YEAR = {2003},
     PAGES = {xii+344},
      ISBN = {0-8218-3408-8},
   MRCLASS = {46A40 (46N10 91B50)},
  MRNUMBER = {2011364},
MRREVIEWER = {Pedro\ Jim\'{e}nez Guerra},
       DOI = {10.1090/surv/105},
       URL = {https://doi.org/10.1090/surv/105},
}

@article {aliprantis1974,
    AUTHOR = {Aliprantis, Charalambos D.},
     TITLE = {On the completion of {H}ausdorff locally solid {R}iesz spaces},
   JOURNAL = {Trans. Amer. Math. Soc.},
  FJOURNAL = {Transactions of the American Mathematical Society},
    VOLUME = {196},
      YEAR = {1974},
     PAGES = {105--125},
      ISSN = {0002-9947,1088-6850},
   MRCLASS = {46A40},
  MRNUMBER = {350372},
MRREVIEWER = {D.\ H.\ Fremlin},
       DOI = {10.2307/1997016},
       URL = {https://doi.org/10.2307/1997016},
}

@article {wickstead2011,
    AUTHOR = {Wickstead, A. W.},
     TITLE = {Charalambos {D}. {A}liprantis (1946--2009)},
   JOURNAL = {Positivity},
  FJOURNAL = {Positivity. An International Mathematics Journal Devoted to
              Theory and Applications of Positivity},
    VOLUME = {15},
      YEAR = {2011},
    NUMBER = {4},
     PAGES = {539--551},
      ISSN = {1385-1292,1572-9281},
   MRCLASS = {01A70 (46-03 47-03 90-03 91-03)},
  MRNUMBER = {2861598},
       DOI = {10.1007/s11117-010-0091-7},
       URL = {https://doi.org/10.1007/s11117-010-0091-7},
}

@article {aliprantis_burkinshaw1977,
    AUTHOR = {Aliprantis, C. D. and Burkinshaw, O.},
     TITLE = {On universally complete {R}iesz spaces},
   JOURNAL = {Pacific J. Math.},
  FJOURNAL = {Pacific Journal of Mathematics},
    VOLUME = {71},
      YEAR = {1977},
    NUMBER = {1},
     PAGES = {1--12},
      ISSN = {0030-8730,1945-5844},
   MRCLASS = {46A40},
  MRNUMBER = {442633},
MRREVIEWER = {A.\ C.\ Zaanen},
       URL = {http://projecteuclid.org/euclid.pjm/1102811629},
}

@article {luxemburg_zaanen1964_X,
    AUTHOR = {Luxemburg, W. A. J. and Zaanen, A. C.},
     TITLE = {Notes on {B}anach function spaces. {X}, {XI}, {XII}, {XIII}},
      NOTE = {Nederl. Akad. Wetensch. Proc. Ser. A {\bf 67}},
   JOURNAL = {Indag. Math.},
  FJOURNAL = {},
    VOLUME = {26},
      YEAR = {1964},
     PAGES = {493--506, 507--518, 519--529, 530--543},
   MRCLASS = {46.06 (46.35)},
  MRNUMBER = {173168},
MRREVIEWER = {H.\ Gordon},
}

@book {fremlin1974,
    AUTHOR = {Fremlin, D. H.},
     TITLE = {Topological {R}iesz spaces and measure theory},
 PUBLISHER = {Cambridge University Press, London-New York},
      YEAR = {1974},
     PAGES = {xiv+266},
   MRCLASS = {46A40 (28A60 46G10)},
  MRNUMBER = {454575},
MRREVIEWER = {A.\ C.\ Zaanen},
}

@book {wnuk1999,
    AUTHOR = {Wnuk, Witold},
     TITLE = {Banach lattices with order continuous norms},
      ISBN = {83-01-12927-1},
 PUBLISHER = {Warsaw: Polish Scientific Publishers PWN},
      YEAR = {1999},
  LANGUAGE = {English},
    ZBMATH = {1327426},
       ZBL = {0948.46017},
}

@article {luxemburg1965,
    AUTHOR = {Luxemburg, W. A. J.},
     TITLE = {Notes on {B}anach function spaces. {XVI}a, {XVI}b},
      NOTE = {Nederl. Akad. Wetensch. Proc. Ser. A {\bf 68}},
   JOURNAL = {Indag. Math.},
  FJOURNAL = {},
    VOLUME = {27},
      YEAR = {1965},
     PAGES = {646--657; 658--667},
   MRCLASS = {46.06 (46.35)},
  MRNUMBER = {188770},
MRREVIEWER = {H.\ Gordon},
}

@article {kawai1957,
    AUTHOR = {Kawai, Itizo},
     TITLE = {Locally convex lattices},
   JOURNAL = {J. Math. Soc. Japan},
  FJOURNAL = {Journal of the Mathematical Society of Japan},
    VOLUME = {9},
      YEAR = {1957},
     PAGES = {281--314},
      ISSN = {0025-5645,1881-1167},
   MRCLASS = {46.00},
  MRNUMBER = {95399},
MRREVIEWER = {I.\ Namioka},
       DOI = {10.2969/jmsj/00930281},
       URL = {https://doi.org/10.2969/jmsj/00930281},
}

@book {narici_beckenstein2011,
    AUTHOR = {Narici, Lawrence and Beckenstein, Edward},
     TITLE = {Topological vector spaces},
    SERIES = {Pure and Applied Mathematics (Boca Raton)},
    VOLUME = {296},
   EDITION = {Second},
 PUBLISHER = {CRC Press, Boca Raton, FL},
      YEAR = {2011},
     PAGES = {xviii+610},
      ISBN = {978-1-58488-866-6},
   MRCLASS = {46-01 (46Axx)},
  MRNUMBER = {2723563},
MRREVIEWER = {Luis\ Manuel\ S\'{a}nchez Ruiz},
}

@book {schaefer1966,
    AUTHOR = {Schaefer, Helmut H.},
     TITLE = {Topological vector spaces},
 PUBLISHER = {The Macmillan Company, New York; Collier Macmillan Ltd.,
              London},
      YEAR = {1966},
     PAGES = {ix+294},
   MRCLASS = {46.00 (46.01)},
  MRNUMBER = {193469},
MRREVIEWER = {A.\ Pietsch},
}

@article {bilokopytov2023,
    AUTHOR = {Bilokopytov, Eugene},
     TITLE = {Locally solid convergences and order continuity of positive
              operators},
   JOURNAL = {J. Math. Anal. Appl.},
  FJOURNAL = {Journal of Mathematical Analysis and Applications},
    VOLUME = {528},
      YEAR = {2023},
    NUMBER = {1},
     PAGES = {Paper No. 127566, 23},
      ISSN = {0022-247X,1096-0813},
   MRCLASS = {46A40 (46B40 46B42 47B60)},
  MRNUMBER = {4618013},
       DOI = {10.1016/j.jmaa.2023.127566},
       URL = {https://doi.org/10.1016/j.jmaa.2023.127566},
}

@article {buskes_labuda1988,
    AUTHOR = {Buskes, G. and Labuda, I.},
     TITLE = {On {L}evi-like properties and some of their applications in
              {R}iesz space theory},
   JOURNAL = {Canad. Math. Bull.},
  FJOURNAL = {Canadian Mathematical Bulletin. Bulletin Canadien de
              Math\'{e}matiques},
    VOLUME = {31},
      YEAR = {1988},
    NUMBER = {4},
     PAGES = {477--486},
      ISSN = {0008-4395,1496-4287},
   MRCLASS = {46A40 (03E35)},
  MRNUMBER = {971576},
MRREVIEWER = {W.\ A. J. Luxemburg},
       DOI = {10.4153/CMB-1988-069-3},
       URL = {https://doi.org/10.4153/CMB-1988-069-3},
}

@article {fremlin1975,
    AUTHOR = {Fremlin, D. H.},
     TITLE = {Inextensible {R}iesz spaces},
   JOURNAL = {Math. Proc. Cambridge Philos. Soc.},
  FJOURNAL = {Mathematical Proceedings of the Cambridge Philosophical
              Society},
    VOLUME = {77},
      YEAR = {1975},
     PAGES = {71--89},
      ISSN = {0305-0041,1469-8064},
   MRCLASS = {46A40},
  MRNUMBER = {355521},
MRREVIEWER = {A.\ C.\ Zaanen},
       DOI = {10.1017/S0305004100049422},
       URL = {https://doi.org/10.1017/S0305004100049422},
}

@book {Jech,
    AUTHOR = {Jech, Thomas},
     TITLE = {Set theory},
    SERIES = {Springer Monographs in Mathematics},
   EDITION = {The third millennium edition, revised and expanded},
 PUBLISHER = {Springer-Verlag, Berlin},
      YEAR = {2003},
     PAGES = {xiv+769},
      ISBN = {3-540-44085-2},
   MRCLASS = {03Exx (03-01 03-02)},
  MRNUMBER = {1940513},
MRREVIEWER = {Eva Coplakova},
       DOI = {10.1007/3-540-44761-X},
       URL = {https://doi.org/10.1007/3-540-44761-X},
}

\end{document}